\documentclass[12pt,a4paper]{amsart}

\usepackage[utf8]{inputenc}

\usepackage[T1]{fontenc}         

\usepackage{amsmath}             
\usepackage{amssymb}            
\usepackage{amsfonts}             
\usepackage{mathrsfs}
                                 
\usepackage{latexsym}            
\usepackage{amsthm}

\usepackage{url}

\frenchspacing                   
\newcommand{\field}[1]{\mathbb{#1}}    
\newcommand{\N}{\field{N}}             
\newcommand{\Z}{\field{Z}}            
            
\newcommand{\R}{\field{R}}            
           
\newcommand{\Hb}{\field{H}}			 
\newcommand{\mc}[1]{\mathcal{#1}}

\DeclareMathOperator{\diam}{\mathrm{diam}}
\DeclareMathOperator{\capa}{\mathrm{Cap}}

\theoremstyle{plain}
\newtheorem{theorem}{Theorem}[section]    
\newtheorem{lemma}[theorem]{Lemma}      
      
\newtheorem{prop}[theorem]{Proposition}
\newtheorem{sublemma}{Sublemma}

\theoremstyle{definition}

\theoremstyle{remark}
\newtheorem{remark}[theorem]{Remark}

\usepackage{graphicx}
\graphicspath{ {./images/} }

\usepackage{blindtext}

\begin{document}
\title[Limsup sets of rectangles in Heisenberg groups]{Hausdorff dimension of limsup sets of isotropic rectangles in Heisenberg groups}
\author{Markus Myllyoja}
\date{}

\subjclass[2010]{28A80, 60D05.}
\keywords{Random covering set, Hausdorff dimension, Heisenberg group}
\thanks{This project has been supported by the Academy of Finland, project grant No. 318217, and by the University of Oulu Graduate School. I also thank my doctoral supervisors Esa Järvenpää and Maarit Järvenpää for their guidance throughout this project.}

\begin{abstract}
A formula for the Hausdorff dimension of typical limsup sets generated by randomly distributed isotropic rectangles in Heisenberg groups is derived in terms of directed singular value functions.
\end{abstract}

\maketitle

\section{Introduction}
In this paper we study the Hausdorff dimension of a typical limsup set generated by rectangles, where the centers of the rectangles are distributed randomly according to the Lebesgue measure. A motivation for our studies is to find counterparts in the Heisenberg setting for results known about random limsup sets in Euclidean spaces.

Recently, dimensional properties in Heisenberg groups have been studied widely. Usually the objective of these studies is to consider some results known in Euclidean spaces and find the correct analogues in the Heisenberg setting. Since the relation between Hausdorff dimension with respect to the Euclidean and the Heisenberg metrics is not a trivial one \cite{BRSC2003, BT2005}, the Euclidean dimension results do not directly translate to results in Heisenberg groups and more careful consideration is required.

Hausdorff dimensions of self-similar and self-affine sets in the Heisenberg setting have been studied, for example, in \cite{BBMT2010,BT2005,BTW2009,CTU2020}. In \cite{BDCFM2013,BMFT,FH2016,H2014}, the authors study the Hausdorff dimensions of projections and slices in Heisenberg groups. The two types of natural projections and slices in Heisenberg groups are the horizontal and vertical ones. The vertical projections are not quite as well-behaved as the horizontal projections in the sense that they are, for example, not Lipschitz continuous. This means that methods employed in the Euclidean projection theorems cannot be used in the Heisenberg setting and the theorems themselves turn out to have a different character than their Euclidean counterparts.

Our focus is on the dimensional properties of random limsup sets in Heisenberg groups. Given a space $X$ and a sequence $(A_n)$ of subsets of $X$, the limsup set generated by the sequence $(A_n)$ is the set of points in $X$ which belong to infinitely many of the sets $A_n$, that is,
\begin{align*}
\limsup_{n\to \infty}A_n=\bigcap_{n=1}^{\infty}\bigcup_{k=n}^{\infty}A_k.
\end{align*}
The study of limsup sets has a long history, and can be traced back at least as far as the Borel-Cantelli lemma. These limsup sets are essential objects in the study of Besicovitch-Eggleston sets related to $k$-adic expansions of real numbers \cite{B1935,E1949}, and in Diophantine approximation \cite{J1932,K1926}. For developments in Diophantine approximation in Heisenberg groups, see \cite{SV2017,V2016,Z2019}.

Dimensional properties of random limsup sets have been studied since \cite{FW2004}. There the authors considered limsup sets of arcs whose centers are randomly placed on the unit circle where the randomness is determined by the Lebesgue measure. They obtained a formula for the Hausdorff dimension of a typical limsup set generated by arcs. 

Since \cite{FW2004}, there have been many developments in the study of dimensional properties of random limsup sets in metric spaces.
For various results, see  \cite{D2010,EP2018,FST2013,FJJS2018,JJKLS2014,JJKLSX2017,P2015,P2019,S2018}. 
Together, the dimension results in these papers yield the almost-sure values of Hausdorff dimensions of random limsup sets in varying scenarios depending on properties of the ambient space, the sets generating the limsup sets and the measure according to which the sets are distributed.  

If the underlying space is a Riemann manifold and the driving measure is the Lebesgue measure, then the generating sets can be any Lebesgue measurable sets satisfying some necessary density conditions. If the underlying space is the Euclidean or the symbolic space and the generating sets are balls, then the restrictions imposed on the measure can be relaxed a bit, although some special properties, like the measure being a Gibbs measure, are required. The underlying space can be taken to be any Ahlfors regular metric space provided that the generating sets are balls and the randomness is determined by the natural measure.

In \cite{EJJS2018}, the authors consider random limsup sets generated by rectangles in products of Ahlfors regular metric spaces. There, the Lipschitz continuity of projections plays an essential role, hence the same methods are not readily available when considering rectangles in Heisenberg groups and some new machinery is required.

In the paper \cite{ekstrometal}, the authors obtained an almost-sure formula for the Hausdorff dimension of a random limsup set generated by a certain type of rectangles in the first Heisenberg group. To accomplish this, they generalized certain Euclidean results to the context of unimodular groups (see Theorem~\ref{theorem1.3}). They then carried out calculations specific to these types of rectangles and the first Heisenberg group to complete the proof.

With the definition of rectangles given in \cite{ekstrometal}, the dimension formula extends to higher-dimensional Heisenberg groups in a rather straightforward manner (see Remark~\ref{remark:otherrectangles} below). However, in Heisenberg groups there are also other natural ways of defining rectangles and the purpose of this paper is to obtain a dimension formula that is valid for limsup sets generated by these types of rectangles.

The paper is organized as follows:
First, we fix some notations and introduce the required concepts in order to present our main result (see Theorem~\ref{theorem1.1} below). In Section~\ref{section:contentandenergy} we establish upper bounds for the Hausdorff content and lower bounds for the capacity of rectangles. In Section~\ref{section:proofofmainresult}, we combine the results of Section~\ref{section:contentandenergy} with a theorem proved in \cite{ekstrometal} to prove  Theorem~\ref{theorem1.1}. We end the paper with a brief comment on the main result in \cite{ekstrometal} and its generalization to higher dimensional Heisenberg groups.

\section{Notation and Main Result}
In a metric space $(X,d)$, we write $B(x,r)$ and $\overline{B}(x,r)$ for the open and closed balls with radius $r$ and center $x\in X$, respectively. Throughout this paper,  $a\wedge b$ denotes the minimum of two real numbers $a$ and $b$.
 For $x,y\in \R^m$ we denote the Euclidean norm of $x$ simply by $|x|$ and we write $\langle x,y\rangle$ for the Euclidean inner product of $x$ and $y$. The dimension $m$ of the Euclidean space in question will be clear from context.

Before stating our main result, we introduce the Heisenberg groups and discuss their properties which we will need in our analysis. For more details on general properties of Heisenberg groups we refer the reader to \cite{BMFT} and \cite{MSSC}.

The Heisenberg group $\Hb:=\Hb^n$ is the set $\R^{2n+1}$ equipped with the group operation 
\begin{align*}
p\ast p'&=\left( x+x',y+y',z+z'+2\sigma((x,y),(x',y'))\right) \\
&=\left( x+x',y+y',z+z'+2\sum_{i=1}^{n}\left( x_iy'_i-y_ix'_i \right)\right)\\
&=\left( x+x',y+y',z+z'+2\left( \langle x,y'\rangle-\langle y,x'\rangle \right)\right)
\end{align*}
 where $p=(x,y,z), p'=(x',y',z')\in \R^n\times\R^n\times \R$ and $\sigma$  is the standard symplectic form 
on $\R^{2n}$ given by the formula
 \begin{align*}
 \sigma((x,y),(x',y'))=\sum_{i=1}^{n}\left( x_iy'_i-y_ix'_i \right) .
 \end{align*}
 The neutral element of $\Hb$ is $(0,0,0)$ and the inverse of an element $p=(x,y,z)$ is $p^{-1}=(-x,-y,-z)$. There is a norm on $\Hb$ given by
$$\Vert p\Vert_{\Hb}=\left( |(x,y)|^{4}+z^2 \right)^{1/4}$$
which determines a left-invariant metric via the formula
\begin{align*}
d_{\Hb}(p,p')&=\Vert p^{-1}p'\Vert_{\Hb}\\
&=\Big( |(x',y')-(x,y) |^{4}+\big( z'-z-2(\langle x,y' \rangle -\langle y,x' \rangle) \big)^{2} \Big)^{1/4} .
\end{align*}
The Haar measure on $\Hb$ is the Lebesgue measure, which is invariant under both left and right translations so that $\Hb$ is a unimodular group. In the metric $d_{\Hb}$, the Lebesgue measure of $B(p,r)$ is proportional to $r^{2n+2}$ and thus the metric space $(\Hb,d_{\Hb})$ has Hausdorff dimension $2n+2$.
The vertical line through the origin is $$L(0)=\{ p\in \Hb\mid x=y=0\},$$ and the vertical line through $p$ is the set 
$$L(p):=pL(0)=\{ (x,y,z')\in \Hb\mid z'\in \R\}.$$ Moreover, the horizontal plane through the origin is 
$$H(0)=\{ p\in \Hb\mid z=0\} $$
and the horizontal plane through $p$ is defined as
$$H(p):=pH(0)=\{ p'\in \Hb\mid z'=z+2(\langle x,y' \rangle -\langle y,x' \rangle)\} .$$
Note that $d_{\Hb}(p,p')\geq |(x,y)-(x',y') |$ for every $p$ and $p'$ in $\Hb$, with an equality if and only if $p'\in H(p)$. 

In the following we will work with homogeneous subgroups of $\Hb$. These are the closed subgroups of $\Hb$ which are invariant under the intrinsic dilations $(x,y,z)\mapsto (sx,sy,s^2z)$, $s>0$. There are two kinds of homogeneous subgroups of $\Hb$, namely, the horizontal subgroups and vertical subgroups. A horizontal subgroup $\mathbb{V}$ of $\Hb$ is of the form $\mathbb{V}=V\times\{0\}$, where $V$ is an isotropic subspace of $\R^{2n}$, that is, $V\subseteq \R^{2n}$ is a linear subspace with the property that $\sigma_{|_{V}}=0$. A vertical subgroup $\mathbb{V}^{\perp}\subseteq \Hb$ is a subgroup  of the form $\mathbb{V}^{\perp}=V^{\perp}\times\R$, where $V^{\perp}\subseteq \R^{2n}$ is the orthogonal complement of an isotropic subspace $V$. The dimension (as a vector space) of a nontrivial isotropic subspace $V\subseteq \R^{2n}$ can be any value from the set $\{1,\ldots,n\}$.

Given any linear subspace $W\subseteq \R^{2n}$, we denote the orthogonal projection onto $W$ by $P_{W}\colon \R^{2n}\to W$. Each isotropic subspace $V\subseteq \R^{2n}$ yields a semidirect group splitting of $\Hb$ in two ways. Firstly, we obtain the Heisenberg group as the semidirect product $\Hb=\mathbb{V}^{\perp}\rtimes \mathbb{V}$ since each point $p\in \Hb$ can be uniquely written as 
$$p=\left(P_{V^{\perp}}(x,y),z-2\sigma(P_{V^{\perp}}(x,y),P_{V}(x,y))\right)\ast(P_{V}(x,y),0) $$
where $\left(P_{V^{\perp}}(x,y),z-2\sigma(P_{V^{\perp}}(x,y),P_{V}(x,y))\right)\in \mathbb{V}^{\perp}$ and $(P_{V}(x,y),0)\in \mathbb{V}$.
This gives rise to well-defined horizontal and vertical projections
\begin{align*}
P_{\mathbb{V}}\colon \Hb\to \mathbb{V},  P_{\mathbb{V}}(p)=(P_{V}(x,y),0)
\end{align*} 
and 
\begin{align*}
P_{\mathbb{V}^{\perp}}\colon \Hb\to \mathbb{V}^{\perp},  P_{\mathbb{V}^{\perp}}(p)=\left(P_{V^{\perp}}(x,y),z-2\sigma(P_{V^{\perp}}(x,y),P_{V}(x,y))\right).
\end{align*}
On the other hand, we have that $\Hb=\mathbb{V}\rtimes \mathbb{V}^{\perp}$, since each point $p$ has a unique representation
$$p=(P_{V}(x,y),0)\ast\left(P_{V^{\perp}}(x,y),z+2\sigma(P_{V^{\perp}}(x,y),P_{V}(x,y))\right) $$
where $$Q_{\mathbb{V}}(p):=(P_{V}(x,y),0)\in \mathbb{V}$$
and
$$Q_{\mathbb{V}^{\perp}}(p):=\left(P_{V^{\perp}}(x,y),z+2\sigma(P_{V^{\perp}}(x,y),P_{V}(x,y))\right)\in \mathbb{V}^{\perp}$$
determine the horizontal and vertical projections (respectively) with respect to this group splitting.
The horizontal projections are Lipschitz continuous and also group homomorphisms, whereas the vertical projections are neither.

For $1\leq d\leq n$ we write 
$$G_{h}(n,d)=\{ V\subseteq \R^{2n}\mid V \text{ is a } d\text{-dimensional isotropic subspace of } \R^{2n}\}. $$
The set $G_{h}(n,d)$ is called the isotropic Grassmannian and it is a submanifold of the usual Grassmannian manifold $G(2n,d)$ consisting of all $d$-dimensional linear subspaces of $\R^{2n}$.

A matrix $M\in M(2n,\R)$ is called symplectic, if it preserves the symplectic form $\sigma$. A matrix which is both symplectic and orthogonal is called orthosymplectic. Write 
$$U(n)=\{ M\in M(2n,\R)\mid M \text{ is orthosymplectic}\} .$$
The set $U(n)$ with matrix multiplication forms a group which acts transitively on $G_{h}(n,d)$ (for the proof of this fact see \cite[Lemma~2.1]{BMFT}). Note that since any $U\in U(n)$ preserves both the form $\sigma$ and the Euclidean inner product, the induced map $\widetilde{U}\colon \Hb\to\Hb$, $(x,y,z)\mapsto (U(x,y),z)$
 preserves both the Heisenberg norm and the metric.

Given $V\in G_{h}(n,d)$ and
a pair of positive numbers $r=(r_1,r_2)$, we define the closed rectangle of type $1$ centered at the origin to be the set
\begin{align*}
&\overline{R}_{1,V}(0,r):=\left\{ p'\mid \Vert P_{\mathbb{V}}(p')\Vert_{\Hb}\leq r_1,\ \Vert P_{\mathbb{V}^{\perp}}(p')\Vert_{\Hb}\leq r_2 \right\}, 
\end{align*}
and the closed rectangle of type $1$ centred at $p$ is defined by
\begin{align*}
\overline{R}_{1,V}(p,r)&:=p\overline{R}_{1,V}(0,r)\\
&=\left\{ p'\mid \Vert P_{\mathbb{V}}(p^{-1}p')\Vert_{\Hb}\leq r_1,\ \Vert P_{\mathbb{V}^{\perp}}(p^{-1}p')\Vert_{\Hb}\leq r_2 \right\}.
\end{align*}

We define rectangles with respect to the group splitting $\Hb=\mathbb{V}\rtimes \mathbb{V}^{\perp}$ similarly with $Q$ in place of $P$. More precisely, we let
\begin{align*}
&\overline{R}_{2,V}(0,r):=\left\{ p'\mid \Vert Q_{\mathbb{V}}(p')\Vert_{\Hb}\leq r_1,\ \Vert Q_{\mathbb{V}^{\perp}}(p')\Vert_{\Hb}\leq r_2 \right\} 
\end{align*}
be the closed rectangle of type $2$ centred at the origin and we define
\begin{align*}
\overline{R}_{2,V}(p,r)&:=p\overline{R}_{2,V}(0,r)\\
&=\left\{ p'\mid \Vert Q_{\mathbb{V}}(p^{-1}p')\Vert_{\Hb}\leq r_1,\ \Vert Q_{\mathbb{V}^{\perp}}(p^{-1}p')\Vert_{\Hb}\leq r_2 \right\}
\end{align*}
to be the closed rectangle of type $2$ centred at $p$.

By the left-invariance of the metric and the relations
$$pP_{\mathbb{V}^{\perp}}(p^{-1}p')P_{\mathbb{V}}(p^{-1}p')=p'=pQ_{\mathbb{V}}(p^{-1}p')Q_{\mathbb{V}^{\perp}}(p^{-1}p') $$
we obtain the following interpretations for the rectangles. 
The rectangle $\overline{R}_{1,V}(p,r)$ is the set of such points that can be reached from the center $p$ by first moving "vertically"\ in $p\mathbb{V}^{\perp}$ a distance at most $r_2$ to end up at a point, say $q\in p\mathbb{V}^{\perp}$, and then moving "horizontally"\ in $q\mathbb{V}$ a distance at most $r_1$. For the rectangle $\overline{R}_{2,V}(p,r)$ we first move from the center "horizontally"\ in $p\mathbb{V}$ a distance at most $r_1$ and then "vertically"\ in $q\mathbb{V}^{\perp}$ a distance at most $r_2$.

Figures \ref{figure1} and \ref{figure2} shed light into what the rectangles look like in the first Heisenberg group when $\mathbb{V}$ is the $x$-axis. For explicit formulas defining the rectangles in this case, see the proof of Proposition~\ref{prop:upperboundrectangle}.

\begin{figure}[h]

\includegraphics[scale=0.1985]{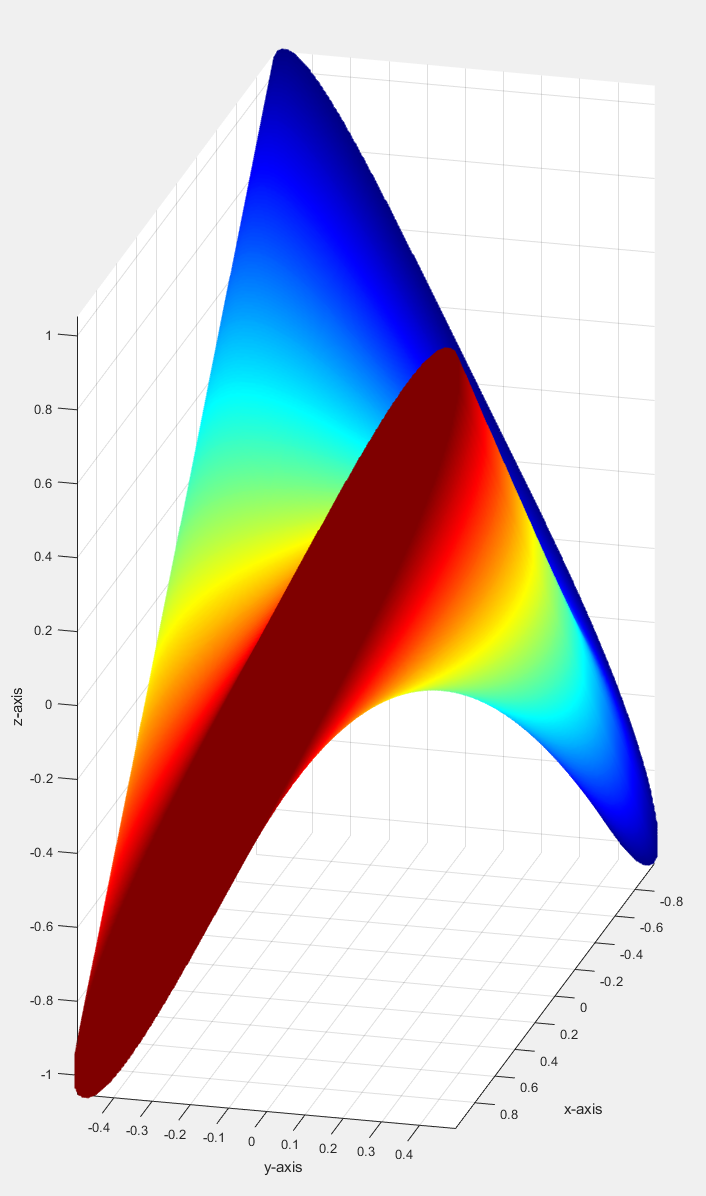}\includegraphics[scale=0.20]{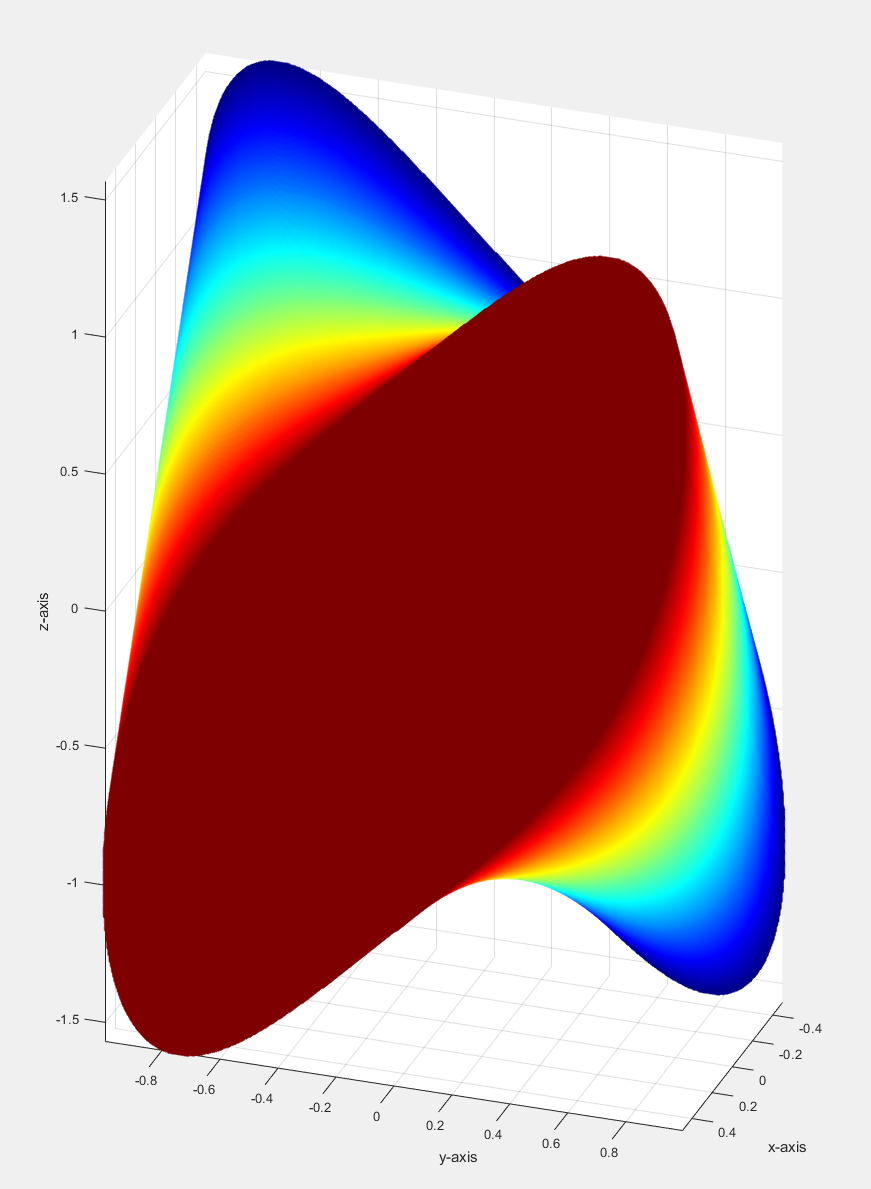}
\caption{Rectangles of type $2$ centered at the origin with $r=(1/2,1)$ and $r=(1,1/2)$, respectively.}
\label{figure1} 
\end{figure}

\begin{figure}[h]

\includegraphics[scale=0.1965]{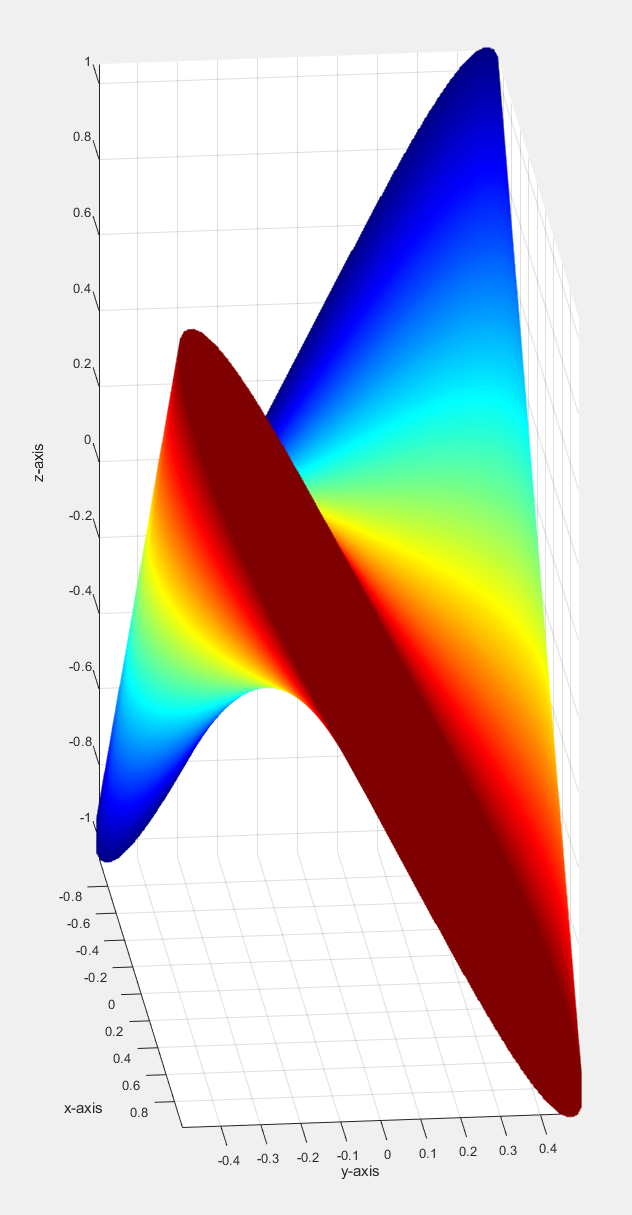}\includegraphics[scale=0.20]{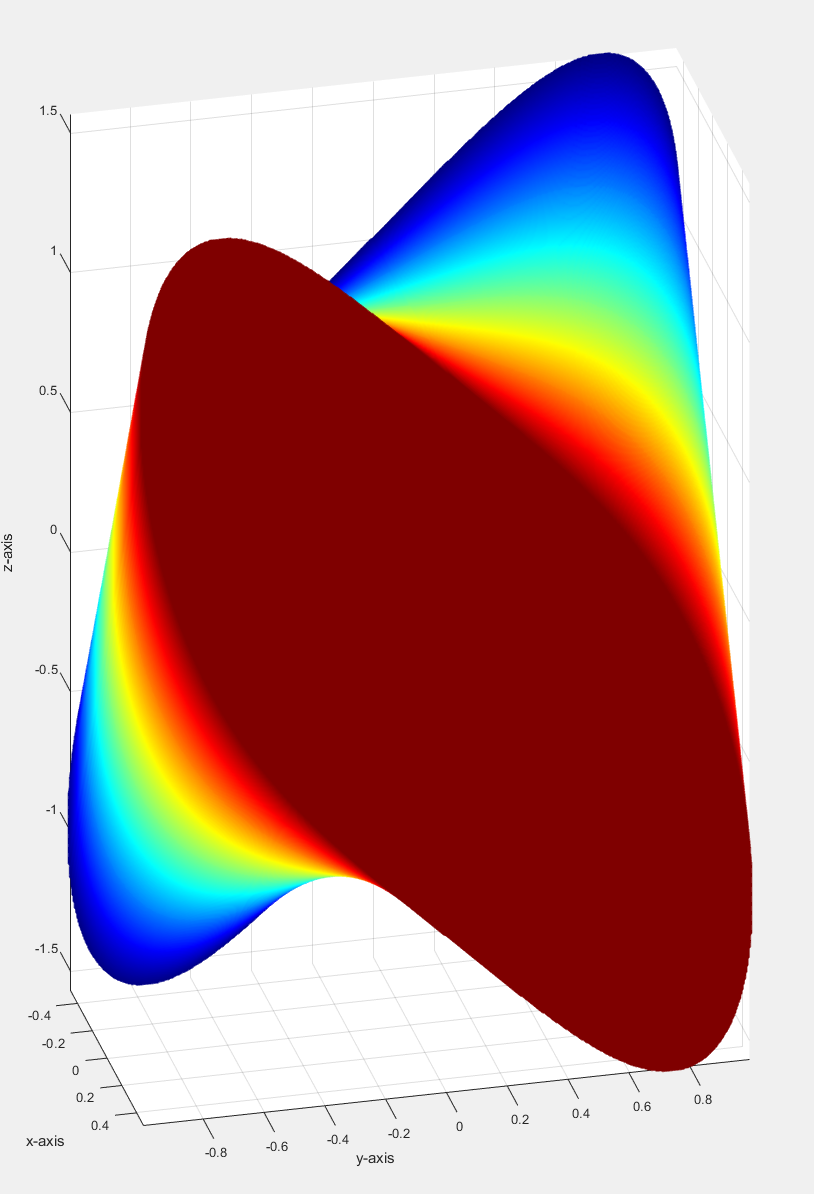}
\caption{Rectangles of type $1$ centered at the origin with $r=(1/2,1)$ and $r=(1,1/2)$, respectively.}
\label{figure2}
\end{figure}

While the rectangles of type $1$ and type $2$ look rather similar and have, for example, the same Lebesgue measure, they are quite different objects in the eyes of the Heisenberg metric. Indeed, for type $2$ rectangles depicted in the above figures, the set of points sharing the same $x$-coordinate is just the intersection $\overline{B}((x,0,0),r_2)\cap (\{ x\}\times \R^2)$. On the other hand, for type $1$ rectangles this set of points is  smaller than $r_2$ in some directions and much larger in some other directions provided that the $x$-coordinate is large compared to $r_2$. Because of this, the analysis on type $1$ rectangles is  (in the case $r_1> r_2$) somewhat more complicated as we will see later in the proofs of our results.

Figure \ref{figure3} demonstrates the effect of translation on rectangles.

\begin{figure}[h]

\includegraphics[scale=0.193]{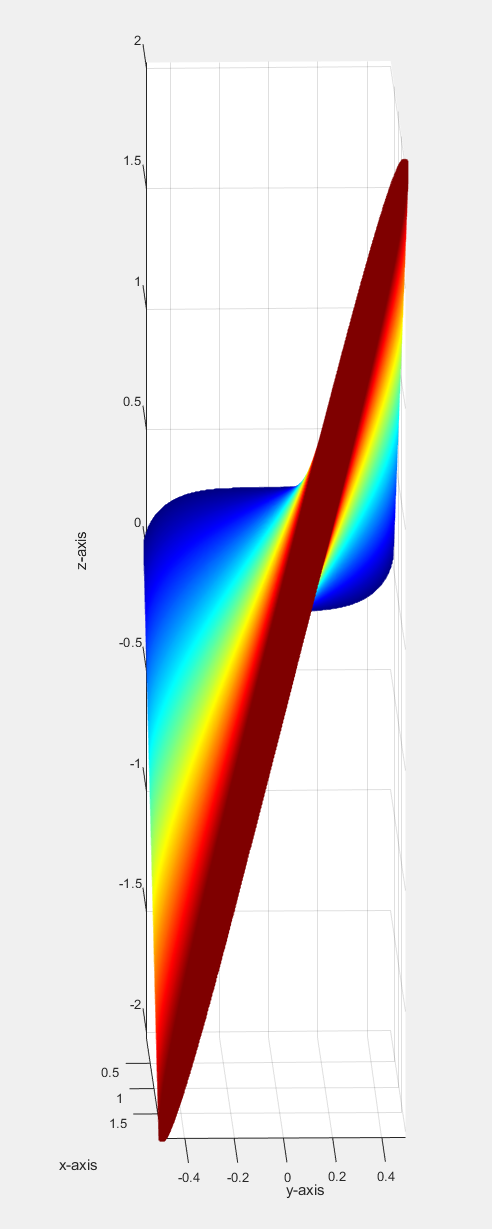}\includegraphics[scale=0.1956]{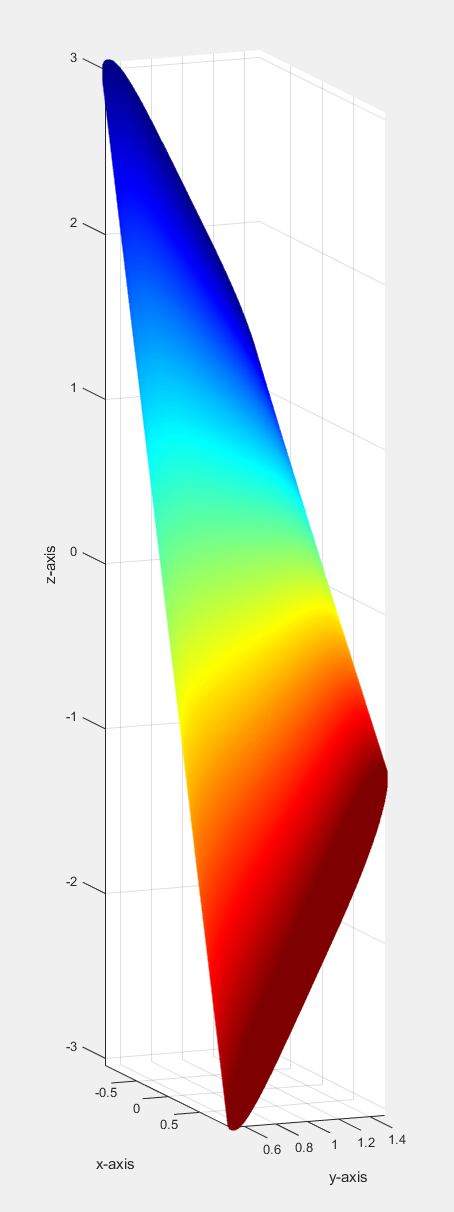}\includegraphics[scale=0.2]{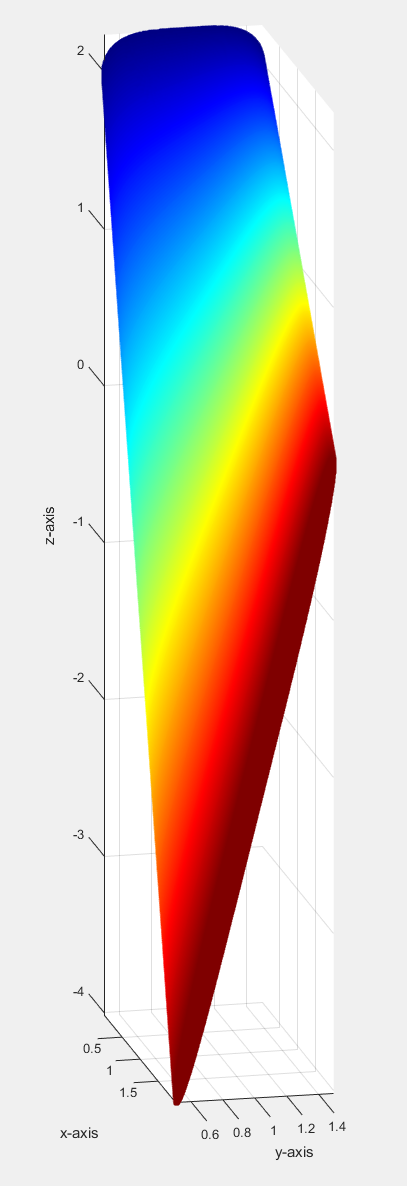}
\caption{The rectangle $\overline{R}_{2,V}(0,(1/2,1))$ from Figure \ref{figure1} translated to the centers $(1,0,0)$, $(0,1,0)$ and $(1,1,0)$, respectively.}
\label{figure3}
\end{figure}

It is already evident from the rectangles above that the shape of a translated rectangle depends greatly on the center point $p$. When considering a rectangle centered at some arbitrary point, the following formulas are useful. For any $p\in \Hb$, it is true that
\begin{align*}
p\mathbb{V}&=\{ p'\mid p^{-1}p'\in \mathbb{V}\}\\ &=\{ p'\mid (x',y')-(x,y)\in V, z'=z+2(\langle x,y' \rangle-\langle y,x' \rangle)\}
\end{align*} 
and
\begin{align*}
p\mathbb{V}^{\perp}&=\{ p'\mid p^{-1}p'\in \mathbb{V}^{\perp}\}=\{ p'\mid (x',y')-(x,y)\in V^{\perp} \}.
\end{align*}

For an integer $d\in \{1,\ldots,n\}$, a pair of positive numbers $r=(r_1,r_2)$ and $t\in [0,2n+2]$ we define the directed singular value functions $\Phi_{1}^t$ and $\Phi_{2}^t$ as follows: If $r_1\leq r_2$, let
\begin{align*}
\Phi_{1}^t(r,d)=\Phi_{2}^t(r,d)=\begin{cases}
r_2^t, &\text{ if } t\in [0,2n+2-d],\\
r_1^{t+d-2(n+1)}r_2^{2(n+1)-d}, &\text{ if } t\in [2n+2-d,2n+2].
\end{cases}
\end{align*}
If $r_1\geq r_2$ let
\begin{align*}
\Phi_{1}^t(r,d)=\begin{cases}
r_1^t, &\text{ if } t\in [0,d],\\
r_1^{\frac{t+d}{2}}r_2^{\frac{t-d}{2}}, &\text{ if } t\in [d,d+2],\\
r_1^{d+1}r_2^{t-d-1}, &\text{ if } t\in ]d+2,2n+1[,\\
r_1^{2n+2+d-t}r_2^{2t-(2n+2+d)}, &\text{ if } t\in [2n+1,2n+2],
\end{cases}
\end{align*}
and let
\begin{align*}
\Phi_{2}^t(r,d)=\begin{cases}
r_1^t, &\text{ if } t\in [0,d],\\
r_1^{d}r_2^{t-d}, &\text{ if } t\in [d,2n+2].
\end{cases}
\end{align*}

Denote by $\lambda$ the Lebesgue measure on $\Hb$ and let $W\subseteq \Hb$ be an open bounded subset of $\Hb$. Define the probability space $(\Omega, \mathbb{P})$ by $\Omega=\Hb^{\N}$ and $\mathbb{P}=(\lambda_W)^{\N}$, where $\lambda_{W}=\lambda(W)^{-1}\lambda_{|_{W}}$. 

For a sequence $(d_k)\in\{ 1,\ldots,n\}^{\N}$, a sequence $\underline{V}:=(V_k)$ of isotropic subspaces $V_k\in G_{h}(n,d_k)$, a sequence $\mathbf{j}:=(j_k)\in \{1,2\}^{\N}$ of types, a bounded sequence $\underline{r}:=(r_k)$ of pairs of positive numbers and $\omega=(\omega_1,\omega_2,\ldots)\in \Omega$, let $$E_{\mathbf{j},\underline{V},\underline{r}}(\omega)=\limsup_{k\to \infty}\overline{R}_{j_k,V_k}(\omega_k,r_k).$$
We are now ready to state our main theorem.
\begin{theorem}\label{theorem1.1}
Given sequences $(d_k)$, $\underline{V}=(V_k)$, $\mathbf{j}=(j_k)$ and $\underline{r}=(r_k)$ as above,
$$\dim_{\mathrm{H}} E_{\mathbf{j},\underline{V},\underline{r}}(\omega)=\inf \left\{ t\ \Big| \ \sum_{k}\Phi^{t}_{j_k}(r_k,d_k)<\infty \right\} \wedge (2n+2)$$
for $\mathbb{P}$-almost every $\omega\in\Omega$.
\end{theorem}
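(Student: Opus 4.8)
Write $E:=E_{\mathbf{j},\underline{V},\underline{r}}(\omega)$ and $t_0:=\inf\{t:\sum_k\Phi^t_{j_k}(r_k,d_k)<\infty\}$. The plan is the standard two-sided scheme: prove the deterministic upper bound $\dim_{\mathrm H}E\le t_0\wedge(2n+2)$, valid for every $\omega$, and the almost sure lower bound $\dim_{\mathrm H}E\ge t_0\wedge(2n+2)$. The entire geometric input will come from Section~\ref{section:contentandenergy}, used through two estimates, valid for $t\in[0,2n+2)$ with constants $C,c>0$: the Hausdorff content bound $\mathcal H^t_\infty(\overline{R}_{j,V}(p,r))\le C\,\Phi^t_j(r,d)$ (Proposition~\ref{prop:upperboundrectangle}) and the lower bound $\capa_t(\overline{R}_{j,V}(p,r))\ge c\,\Phi^t_j(r,d)$ for the $t$-capacity. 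Two observations make these usable along the random sequence. First, $\overline{R}_{j,V}(p,r)$ is the left translate $p\ast\overline{R}_{j,V}(0,r)$ of $\overline{R}_{j,V}(0,r)$, hence an isometric image of it because $d_{\Hb}$ is left invariant, and $U(n)$ acts transitively on $G_{h}(n,d)$ by Heisenberg isometries; therefore the content and the capacity of $\overline{R}_{j,V}(p,r)$ depend neither on $p$ nor on $V\in G_{h}(n,d)$, so $C$ and $c$ may be chosen uniformly (there are only finitely many values of $d$). Second, from the explicit formulas every branch of $\Phi^t_j(r,d)$ has the shape $r_1^a r_2^b$ with $a,b\ge 0$ and $a+b=t$, and for $t'\ge t$ one checks branch by branch that $\Phi^{t'}_{j}(r,d)\le C_0^{\,t'-t}\Phi^t_{j}(r,d)$ whenever $r_1,r_2\le C_0$ (the only case requiring a moment's thought is the last branch of $\Phi^t_1$ in the regime $r_1\ge r_2$, where the quotient equals $(r_2^2/r_1)^{t'-t}\le r_2^{\,t'-t}$). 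As $\underline r$ is bounded, it follows that $\{t:\sum_k\Phi^t_{j_k}(r_k,d_k)<\infty\}$ is an up-set; hence $t>t_0$ implies $\sum_k\Phi^t_{j_k}(r_k,d_k)<\infty$, and $t<t_0$ implies $\sum_k\Phi^t_{j_k}(r_k,d_k)=\infty$.

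For the upper bound, since $E\subseteq\Hb$ and $\dim_{\mathrm H}\Hb=2n+2$, it suffices to prove $\dim_{\mathrm H}E\le t$ for every fixed $t>t_0$. By definition of the limsup set, $E\subseteq\bigcup_{k\ge N}\overline{R}_{j_k,V_k}(\omega_k,r_k)$ for every $N$, so countable subadditivity of Hausdorff content and the content estimate yield
\[
\mathcal H^t_\infty(E)\le\sum_{k\ge N}\mathcal H^t_\infty\bigl(\overline{R}_{j_k,V_k}(\omega_k,r_k)\bigr)\le C\sum_{k\ge N}\Phi^t_{j_k}(r_k,d_k),
\]
and the right-hand side tends to $0$ as $N\to\infty$ because $\sum_k\Phi^t_{j_k}(r_k,d_k)<\infty$. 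Thus $\mathcal H^t(E)=0$ and $\dim_{\mathrm H}E\le t$; letting $t\downarrow t_0$ gives $\dim_{\mathrm H}E\le t_0\wedge(2n+2)$, and this holds for every $\omega$.

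For the lower bound, fix $t<t_0\wedge(2n+2)$, so that $\sum_k\Phi^t_{j_k}(r_k,d_k)=\infty$. Then, by the capacity estimate and the invariance noted above,
\[
\sum_k\capa_t\bigl(\overline{R}_{j_k,V_k}(\omega_k,r_k)\bigr)\ge c\sum_k\Phi^t_{j_k}(r_k,d_k)=\infty .
\]
Since $\Hb$ is a unimodular group whose Haar measure is $\lambda$, and each $\overline{R}_{j_k,V_k}(\omega_k,r_k)$ is closed, hence analytic, with uniformly bounded diameter (as $\underline r$ is bounded), Theorem~\ref{theorem1.3} of \cite{ekstrometal} applies and, in view of the divergence just established, gives $\dim_{\mathrm H}E\ge t$ for $\mathbb{P}$-almost every $\omega$. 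Applying this along a sequence $t_i\uparrow t_0\wedge(2n+2)$ and intersecting the corresponding full-measure events, we obtain $\dim_{\mathrm H}E\ge t_0\wedge(2n+2)$ for $\mathbb{P}$-almost every $\omega$, which together with the upper bound proves the theorem.

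Apart from invoking Theorem~\ref{theorem1.3}, the argument above is pure bookkeeping; the real difficulty lies in the two estimates imported from Section~\ref{section:contentandenergy}, and most of all in the capacity lower bound. Its source -- and the reason the definition of $\Phi^t_1$ in the regime $r_1\ge r_2$ has four branches -- is the metric geometry of the type~$1$ rectangles: as pointed out after Figure~\ref{figure3}, the fibre of $\overline{R}_{1,V}(0,r)$ over a point of $\mathbb{V}$ need not be small in all directions, so the content bound requires covering $\overline{R}_{1,V}(0,r)$ efficiently at every scale, and dually the capacity bound requires building a probability measure on it whose $t$-energy is comparable to $\Phi^t_1(r,d)^{-1}$, in such a way that the two sides meet at the single function $\Phi^t_j(r,d)$ throughout $t\in[0,2n+2)$. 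The type~$2$ rectangles, for which $\Phi^t_2$ has only two branches, should be considerably easier.
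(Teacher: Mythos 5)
Your overall scheme (deterministic upper bound via the Hausdorff-content estimate and countable subadditivity; almost-sure lower bound via capacities of the rectangles fed into Theorem~\ref{theorem1.3}; then a monotone limit in $t$) is exactly the paper's. The one substantive problem is the form of the capacity input you assume. You import as a black box the clean bound $\capa_t(\overline{R}_{j,V}(p,r))\ge c\,\Phi^t_j(r,d)$ for all $t\in[0,2n+2)$, but Proposition~\ref{prop:lowerboundrectangle} does not deliver this uniformly: in the case $j=1$, $d=1$, $r_1\ge r_2$, $t\in\,]3,2n+1[\,\setminus\Z$, the energy of the normalized Lebesgue measure on the rectangle carries a factor $\log(2r_1/r_2)$ (coming from the integral $\int_{r_2}^{r_1}\tau^{d-2}\,d\tau$ when $d=1$), which cannot be absorbed into a constant uniform in $r$. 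What the proposition gives there is only $\capa_s(\overline{R}_{1,V}(0,r))\gtrsim \Phi^t_1(r,d)\,r_1^{s-t}$ for $s\in\,]\lfloor t\rfloor,t[\,$. Consequently your step ``$\sum_k\capa_t(\cdot)\ge c\sum_k\Phi^t_{j_k}(r_k,d_k)=\infty$'' does not follow from the paper's estimates at the exponent $t$ itself. The repair is the paper's device: for non-integer $t$ with $\sum_k\Phi^{t}_{j_k}(r_k,d_k)=\infty$, work at every $s\in\,]\lfloor t\rfloor,t[\,$, where the $s$-capacities sum to infinity in all cases (in the regular cases by your monotonicity observation $\Phi^{s}\gtrsim\Phi^{t}$ for a bounded sequence $\underline r$, and in the logarithmic case because $r_1^{s-t}\gtrsim 1$); conclude $\dim_{\mathrm H}E\ge s$ almost surely, let $s\nearrow t$ along a countable sequence, and only then let $t\nearrow t_0\wedge(2n+2)$.

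Two smaller points in the same vein. Proposition~\ref{prop:lowerboundrectangle} is stated only for $t\in\,]0,2n+2[\,\setminus\Z$, so your approximating sequence $t_i\uparrow t_0\wedge(2n+2)$ must be taken through non-integers (harmless, but your argument applies the capacity bound at arbitrary $t$). And Theorem~\ref{theorem1.3} is stated for \emph{open} sets $V_k$, so one should apply it to the interiors of the rectangles rather than appeal to analyticity; this costs nothing because the capacity lower bounds are obtained from restrictions of Lebesgue measure, whose energies are unchanged on passing to the interior. With these adjustments your argument coincides with the proof in Section~\ref{section:proofofmainresult}.
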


\begin{remark}
\begin{enumerate}
\item[a)] If $r_1=r_2=\rho$, then $\Phi_{j}^{t}(r,d)=\rho^{t}$ for every $j,t$ and $d$. In such a case the rectangles are comparable to balls in the sense that 
$$\overline{B}(0,c_{j,d}^{-1}\rho)\subseteq \overline{R}_{j,V}(0,r)\subseteq  \overline{B}(0,c_{j,d}\rho) $$
for some constant $c_{j,d}>1$ depending on $j$ and $d$ but independent of $\rho$ and $V\in G_{h}(n,d)$. If the sequence $(r_k)$ of pairs of positive numbers is contained in the diagonal $\{ (x,x)\mid x>0\}$, then Theorem~\ref{theorem1.1} reduces to the known formula for limsup sets of balls in Ahlfors regular metric spaces (for details, see \cite[Theorem~2.1]{JJKLSX2017} and \cite[Proposition~4.7]{JJKLS2014}). 

\item[b)] We excluded the endpoints $d+2$ and $2n+1$ in the definition of $\Phi^{t}_{1}(r,d)$ for $r_1\geq r_2$ to emphasize the fact that in the first Heisenberg group $\Hb^{1}$ the piece $r_1^{d+1}r_2^{t-d-1}$ does not exist. Indeed, if $n=1$ then automatically $d=1$ (hence $d+2=2n+1$) and for $r_1\geq r_2$ the definition of $\Phi^{t}_{1}(r,1)$ reduces to
\begin{align*}
\Phi^{t}_{1}(r,1)=\begin{cases}
r_1^t, &\text{ if } t\in [0,1],\\
r_1^{\frac{t+1}{2}}r_2^{\frac{t-1}{2}}, &\text{ if } t\in [1,3],\\
r_1^{5-t}r_2^{2t-5}, &\text{ if } t\in [3,4].
\end{cases}
\end{align*}
\end{enumerate}
\end{remark}
\section{Hausdorff content and energy of rectangles}\label{section:contentandenergy}
Our strategy in the proof of Theorem~\ref{theorem1.1} will be as follows: In Proposition~\ref{prop:upperboundrectangle}, we obtain deterministic upper bounds for the Hausdorff content of rectangles, which we will later use to obtain an upper bound for the dimension of a limsup set generated by rectangles. In Proposition~\ref{prop:lowerboundrectangle}, we obtain deterministic lower bounds for the capacity of rectangles. With the help of Theorem \ref{theorem1.3}, these lower bounds then turn into lower bounds for the dimension in Theorem~\ref{theorem1.1}. Before establishing the upper and lower bounds, we recall the relevant definitions. 

The $t$-dimensional Hausdorff content (with respect to the Heisenberg metric) of a set $A\subseteq \Hb$ is defined by
\begin{align*}
\mc{H}_{\infty}^{t}(A)&=\inf \left\{\sum_{k=1}^{\infty} \diam(C_k)^{t}\ \Big| \ A\subseteq \bigcup_{k=1}^{\infty}C_k \right\},
\end{align*}
where the diameter of a set $C\subseteq \Hb$ in the Heisenberg metric is denoted by $\diam(C)$.
For $t>0$ and a Borel measure $\mu$ on $\Hb$, the $t$-energy of $\mu$ is defined by
\begin{align*}
I_t(\mu)&=\int \int d_{\Hb}(p,q)^{-t}\, d\mu(p)\,d\mu(q).
\end{align*}
We define the $t$-energy of a Borel set $A\subseteq \Hb$ to be the quantity $I_t(A)=I_t(\lambda_{|_{A}})$.

The $t$-capacity of a Borel set $A\subseteq \Hb$ is defined by
\begin{align*}
\capa_{t}(A)&=\sup \left\{ I_t(\mu)^{-1}\mid \mu \in \mc{P}(A)  \right\},
\end{align*}
where $\mc{P}(A)$ denotes the space of Borel probability measures on $\Hb$ which are supported on $A$.

We have now defined the necessary concepts and we are ready to proceed with the proof.
 
\begin{lemma}\label{lemma:hausdorffcontentofrectangleindependentofisotropicsubspace}
Let $d\in \{1,\ldots,n\} $ and let $V,V'\in G_{h}(n,d)$. Then for every $j$, $t$ and $r$ it is true that
$$\mc{H}_{\infty}^{t}\left( \overline{R}_{j,V}(0,r) \right)=\mc{H}_{\infty}^{t}\left( \overline{R}_{j,V'}(0,r) \right).$$
\end{lemma}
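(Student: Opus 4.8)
The plan is to exploit the transitivity of the orthosymplectic group $U(n)$ on the isotropic Grassmannian $G_h(n,d)$, together with the fact (noted in the excerpt) that any $U\in U(n)$ induces an isometry $\widetilde{U}\colon\Hb\to\Hb$, $(x,y,z)\mapsto(U(x,y),z)$, preserving both the Heisenberg norm and the Heisenberg metric. Since $U(n)$ acts transitively on $G_h(n,d)$, given $V,V'\in G_h(n,d)$ there is $U\in U(n)$ with $U(V)=V'$; the induced isometry $\widetilde{U}$ then maps the subgroup $\mathbb{V}=V\times\{0\}$ onto $\mathbb{V}'=V'\times\{0\}$ and likewise $\mathbb{V}^\perp$ onto $(\mathbb{V}')^\perp$, because $U$ carries $V^\perp$ to $V'^\perp$ (orthogonality is preserved).

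First I would verify that $\widetilde{U}$ intertwines the relevant projections, i.e.\ $P_{\mathbb{V}'}\circ\widetilde{U}=\widetilde{U}\circ P_{\mathbb{V}}$ and $P_{(\mathbb{V}')^\perp}\circ\widetilde{U}=\widetilde{U}\circ P_{\mathbb{V}^\perp}$, and similarly for $Q_{\mathbb{V}}$, $Q_{\mathbb{V}^\perp}$. This follows from the explicit formulas for the projections: they are built from the orthogonal projections $P_V,P_{V^\perp}$ on $\R^{2n}$ and the symplectic form $\sigma$, and since $U$ preserves both the Euclidean structure (so $P_{V'}=U P_V U^{-1}$ on $\R^{2n}$) and $\sigma$, the conjugated formula for $P_{\mathbb{V}'}$ is exactly $\widetilde{U}$ applied to the formula for $P_{\mathbb{V}}$. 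Consequently $\widetilde{U}$ maps $\overline{R}_{j,V}(0,r)$ bijectively onto $\overline{R}_{j,V'}(0,r)$ for both $j=1,2$, because the defining inequalities $\Vert P_{\mathbb{V}}(p')\Vert_{\Hb}\le r_1$, $\Vert P_{\mathbb{V}^\perp}(p')\Vert_{\Hb}\le r_2$ are transformed into the corresponding inequalities for $\overline{R}_{j,V'}$ using that $\widetilde{U}$ preserves $\Vert\cdot\Vert_{\Hb}$.

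Finally, since $\widetilde{U}$ is a bijective isometry of $(\Hb,d_{\Hb})$, it preserves diameters of sets, hence maps any countable cover of $\overline{R}_{j,V}(0,r)$ to a countable cover of $\overline{R}_{j,V'}(0,r)$ with the same diameters, and vice versa. Taking infima over covers gives $\mc{H}_\infty^t(\overline{R}_{j,V}(0,r))=\mc{H}_\infty^t(\overline{R}_{j,V'}(0,r))$, which is the claim.

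The only real step that needs care is the compatibility of $\widetilde{U}$ with the projections $P_{\mathbb{V}^\perp}$ and $Q_{\mathbb{V}^\perp}$: these involve the $z$-coordinate correction term $z\mp 2\sigma(P_{V^\perp}(x,y),P_V(x,y))$, so I would check explicitly that applying $\widetilde{U}$ (which leaves $z$ untouched) and then projecting agrees with projecting and then applying $\widetilde{U}$, using $\sigma(U\xi,U\eta)=\sigma(\xi,\eta)$ and $U P_V=P_{V'}U$, $U P_{V^\perp}=P_{V'^\perp}U$ on $\R^{2n}$. This is a short computation rather than a genuine obstacle; everything else is formal once transitivity of $U(n)$ on $G_h(n,d)$ is invoked.
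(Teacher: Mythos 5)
Your argument is correct and is essentially identical to the paper's own proof: both use the transitivity of $U(n)$ on $G_h(n,d)$, verify that the induced isometry $\widetilde{U}$ intertwines the horizontal and vertical projections (using that $U$ preserves the Euclidean inner product and the symplectic form $\sigma$), conclude that $\widetilde{U}$ maps one rectangle onto the other, and invoke the invariance of Hausdorff content under isometries. The computation you flag as the one step needing care is exactly the computation the paper writes out.
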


\begin{proof}
We will first verify the claim for type $1$ rectangles. 
Since $U(n)$ acts transitively on $G_{h}(n,d)$, there exists $U\in U(n)$ such that $U(V')=V$, which implies that also $U(V'^{\perp})=V^{\perp}$. Hence for any point $p=(x,y,z)\in \Hb$ we have that $$U(P_{V'}(x,y))=P_{V}(U(x,y)) \text{ and } U(P_{V'^{\perp}}(x,y))=P_{V^{\perp}}(U(x,y)).$$ Thus for the induced map $\widetilde{U}\colon \Hb\to\Hb$ we obtain that
\begin{align*}
P_{\mathbb{V}}(\widetilde{U}(p))=(P_{V}(U(x,y)),0)=(U(P_{V'}(x,y)),0)=\widetilde{U}(P_{\mathbb{V}'}(p))
\end{align*}
and (recalling that the form $\sigma$ is invariant under $U$)
\begin{align*}
P_{\mathbb{V}^{\perp}}(\widetilde{U}(p))&=(P_{V^{\perp}}(U(x,y)),z-2\sigma(P_{V^{\perp}}(U(x,y)),P_{V}(U(x,y))))\\
&=(U(P_{V'^{\perp}}(x,y)),z-2\sigma(U(P_{V'^{\perp}}(x,y)),U(P_{V'}(x,y))))\\
&=(U(P_{V'^{\perp}}(x,y)),z-2\sigma(P_{V'^{\perp}}(x,y),P_{V'}(x,y)))\\
&=\widetilde{U}(P_{\mathbb{V}'^{\perp}}(p)).
\end{align*}
Thus $p\in \overline{R}_{1,V'}(0,r)$ if and only if $\widetilde{U}(p)\in \overline{R}_{1,V}(0,r)$. Since $\widetilde{U}$ preserves the metric $d_{\Hb}$, it also leaves the Hausdorff content invariant and thus $$\mc{H}_{\infty}^{t}\left( \overline{R}_{1,V'}(0,r) \right)=\mc{H}_{\infty}^{t}\left( \overline{R}_{1,V}(0,r) \right).$$
Repeating the same arguments for rectangles of type $2$ completes the proof.
\end{proof}

\begin{prop}\label{prop:upperboundrectangle}
For every $j\in \{1,2\}$ and $t\in [0,2n+2]$,
$$\mc{H}_{\infty}^{t}\left( \overline{R}_{j,V}(p,r) \right)\lesssim  \Phi^{t}_{j}(r,d),$$
where the implicit constant depends on $n$, $t$,  $d$ and $j$, but not on the radius $r$, the center $p$ or the subspace $V\in G_{h}(n,d)$ determining the horizontal and vertical subgroups.
\end{prop}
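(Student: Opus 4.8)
By Lemma~\ref{lemma:hausdorffcontentofrectangleindependentofisotropicsubspace} and the left-invariance of the Heisenberg metric (which leaves $\mc{H}_\infty^t$ invariant), it suffices to bound $\mc{H}_\infty^t(\overline{R}_{j,V}(0,r))$ for one convenient choice of $V\in G_h(n,d)$, say $V=\mathrm{span}(e_1,\ldots,e_d)\subseteq\R^{2n}$ (the first $d$ coordinate axes of the $x$-block). The plan is to produce, for each such rectangle, an explicit efficient cover by Heisenberg balls of a single well-chosen radius $\delta$ — or by a two-scale family — and to count the balls. The first step is therefore to write down explicit coordinate descriptions of $\overline{R}_{1,V}(0,r)$ and $\overline{R}_{2,V}(0,r)$ using the formulas for $P_{\mathbb{V}},P_{\mathbb{V}^\perp},Q_{\mathbb{V}},Q_{\mathbb{V}^\perp}$ given above; since $\|\cdot\|_{\Hb}$ is $(|(x,y)|^4+z^2)^{1/4}$, the condition $\|P_{\mathbb V}(p)\|_{\Hb}\le r_1$ just bounds the $d$ horizontal coordinates in $V$ by $\lesssim r_1$, while $\|P_{\mathbb V^\perp}(p)\|_{\Hb}\le r_2$ bounds the remaining $2n-d$ horizontal coordinates by $\lesssim r_2$ and the ($\sigma$-shifted) vertical coordinate by $\lesssim r_2^2$. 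Type $2$ is the same with the opposite sign in the vertical shift.

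The core estimate is a covering count. Fix the target radius $t$ and split into the cases $r_1\le r_2$ and $r_1\ge r_2$ matching the definition of $\Phi^t_j$; within each case the value of $t$ relative to the breakpoints ($2n+2-d$; resp.\ $d$, $d+2$, $2n+1$) dictates the optimal covering scale. Concretely: to cover a box which has $d$ sides of length $r_1$, $2n-d$ sides of length $r_2$ (in the $(x,y)$-variables) and vertical extent controlled at scale $r_2^2$ (type $1$, $r_1\le r_2$) by Heisenberg balls of radius $\delta$, one uses the ball-volume/metric geometry — a Heisenberg ball of radius $\delta$ has $(x,y)$-footprint $\sim\delta$ in all $2n$ directions and vertical extent $\sim\delta^2$, but crucially, because $d_{\Hb}(p,p')\ge|(x,y)-(x',y')|$ with equality on horizontal planes, the vertical direction is "for free" up to scale $\delta^2$. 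Choosing $\delta$ equal to the appropriate side length and counting $\sim\prod(\text{side}_i/\delta)$ boxes (with the vertical contributing $\max(1,r_2^2/\delta^2)$, resp.\ the type-dependent vertical size over $\delta^2$) yields $N(\delta)$ balls, hence $\mc{H}_\infty^t\le N(\delta)\,\delta^t$, and optimizing $\delta$ over the finitely many natural choices (the side lengths $r_1$, $r_2$, and possibly $r_1^{1/2}r_2^{1/2}$ or $r_2^2/r_1$ coming from the vertical constraint) reproduces exactly the piecewise formula for $\Phi^t_j(r,d)$. The delicate point — and the one flagged in the text after Figure~\ref{figure3} — is the type $1$, $r_1>r_2$ case: there the rectangle is $pV^\perp$-then-$qV$, so a slice at fixed large horizontal position is stretched in some directions far beyond $r_2$; one must track how the $z$-shift $2\sigma(\cdot,\cdot)$ interacts with the two-scale structure, which is what forces the extra breakpoints $d+2$ and $2n+1$ and the mixed exponents like $r_1^{\frac{t+d}{2}}r_2^{\frac{t-d}{2}}$. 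Handling this will likely require a genuinely two-scale cover (balls of radius $r_2$ arranged along a grid of spacing governed by $r_1$ in the $V$-directions, with the vertical twist absorbed), rather than a single-radius cover.

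The main obstacle is exactly this type $1$, $r_1>r_2$ regime: getting the covering count sharp across all four subintervals of $t\in[0,2n+2]$, and in particular verifying that the implicit constant can be taken independent of the center $p$ despite the $p$-dependence of the translated rectangle's shape. For the $p$-independence I would again lean on left-invariance of $\mc{H}_\infty^t$ together with Lemma~\ref{lemma:hausdorffcontentofrectangleindependentofisotropicsubspace}, reducing to $p=0$ at the outset, so that the only real work is the deterministic covering computation at the origin for the standard coordinate subspace. The remaining steps — type $2$ (which is easier, being a genuine "horizontal then vertical" metric product up to constants, giving the clean two-piece formula) and the boundary/continuity checks that the piecewise exponents agree at the breakpoints — are routine and I would dispatch them briefly after the type $1$ analysis.
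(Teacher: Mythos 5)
Your overall strategy is the paper's: reduce to $p=0$ and $V=\R^{d}\times\{0\}$ via left-invariance and Lemma~\ref{lemma:hausdorffcontentofrectangleindependentofisotropicsubspace}, write the rectangle in coordinates, and cover it by Heisenberg balls at the scales $r_1$, $r_2$, $\sqrt{r_1r_2}$, $r_2^2/r_1$ with a volume count. You also correctly locate the difficulty in the type $1$, $r_1\ge r_2$ regime. However, the two estimates that actually produce the type $1$ exponents are missing, and the count as you literally describe it would come out wrong there. First, the rule ``vertical contributes $\max(1,r_2^2/\delta^2)$'' is the type $2$ answer. For type $1$ with $r_1\ge r_2$ and $\delta\le r_2$, the $\delta$-neighbourhood of $R$ has thickness $\approx r_1\delta$ in the sheared vertical coordinate $z+2\langle x^{d},y^{d}\rangle$ (translating by a group element of norm $\delta$ at horizontal position $|x^{d}|\sim r_1$ shifts that coordinate by up to $\sim r_1\delta$, which dominates both $\delta^2$ and $r_2^2$). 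The resulting extra factor $r_1\delta/\delta^2=r_1/\delta$ is exactly what turns the ball counts at scales $r_2$ and $r_2^2/r_1$ into $(r_1/r_2)^{d+1}$ and $(r_1/r_2)^{2n+2+d}$, i.e.\ it is the source of the pieces $r_1^{d+1}r_2^{t-d-1}$ and $r_1^{2n+2+d-t}r_2^{2t-(2n+2+d)}$ and of the breakpoints $d+2$ and $2n+1$. With $\max(1,r_2^2/\delta^2)$ instead, you undercount the cover and the claimed bound is not actually an upper bound.

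Second, the piece $r_1^{\frac{t+d}{2}}r_2^{\frac{t-d}{2}}$ on $[d,d+2]$ cannot be reached by any single-scale volume count: at $\delta=\sqrt{r_1r_2}$ the volume argument only gives $\lesssim (r_1/r_2)^{(d+1)/2}$ balls, which overshoots $\Phi_1^t$ by a factor $(r_1/r_2)^{1/2}$. The mechanism is different: one checks that for $p\in R$, $|z-2\langle x^{d},y^{d}\rangle|\le r_2^2+4r_1r_2$, hence $d_{\Hb}\bigl(p,P_{\mathbb{V}}(p)\bigr)\le 3\sqrt{r_1r_2}$, so $R$ lies in the $3\sqrt{r_1r_2}$-neighbourhood of the $d$-dimensional horizontal disk $\overline{B}_{\R^{d}}(0,r_1)\times\{0\}$ and is covered by $\lesssim (r_1/r_2)^{d/2}$ balls of radius $4\sqrt{r_1r_2}$. (The type $2$ bound $r_1^{d}r_2^{t-d}$ is the same argument with $r_2$ in place of $\sqrt{r_1r_2}$, as you anticipate.) You flag both issues but defer them; since they constitute essentially all of the content of the proposition beyond the routine cases, the proposal as it stands has a genuine gap rather than a complete proof.
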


\begin{proof}
We first consider the more delicate case of rectangles of type $1$. At the end of the proof we will see how similar arguments can be used to obtain the bounds also in the simpler case of type $2$ rectangles. 

Since the metric $d_{\Hb}$ is left-invariant, the Hausdorff content $\mc{H}_{\infty}^{t}$ is also left-invariant and we may assume that $p=(0,0,0)$. 
By Lemma~\ref{lemma:hausdorffcontentofrectangleindependentofisotropicsubspace}, we may also assume that $V$ is the isotropic subspace $\R^{d}\times \{0\}\subseteq \R^{2n}$. 
Write 
$R=\overline{R}_{1,V}(0,r)$.

For points $u\in\R^{n}$ we will from now on use the shorthand notations $u^{d}=(u_1,\ldots,u_d,0,\ldots,0)$ and $u^{\perp}=(0,\ldots,0,u_{d+1},\ldots,u_n)$. Then for $p=(x,y,z)\in \Hb$ we have that
$$P_{V}(x,y)=(x^{d},0) \text{ and } P_{V^{\perp}}(x,y)=(x^{\perp},y). $$
The horizontal and vertical projections thus take the form
\begin{align*}
P_{\mathbb{V}}(p)=(P_{V}(x,y),0)=(x^{d},0,0)
\end{align*}
and
\begin{align*}
P_{\mathbb{V}^{\perp}}(p)&=\left(P_{V^{\perp}}(x,y),z-2\sigma(P_{V^{\perp}}(x,y),P_{V}(x,y))\right)\\
&=\left(x^{\perp},y,z-2\sigma((x^{\perp},y),(x^{d},0))\right)\\
&=\left(x^{\perp},y,z+2\langle x^{d},y^{d} \rangle\right).
\end{align*}
The definition of the rectangle now reads as
\begin{align*}
R&=\left\{ p\mid \Vert P_{\mathbb{V}}(p)\Vert_{\Hb}\leq r_1,\ \Vert P_{\mathbb{V}^{\perp}}(p)\Vert_{\Hb}\leq r_2 \right\}\\
&=\left\{ p\mid |x^{d}|\leq r_1,\ |(x^{\perp},y)|^{4}+|z+2\langle x^{d},y^{d}\rangle|^{2}\leq r_2^{4} \right\}.
\end{align*}
Observe that if $p\in R$, then $|(x,y)|\leq \sqrt{2}\max(r_1,r_2)$ and 
\begin{align*}
|z|\leq r_2^2+2\vert\langle x^{d},y^{d}\rangle\vert\leq r_2^2+2|x^{d}||y^{d}|\leq 3\max(r_1^{2},r_2^{2})
\end{align*}
hence
\begin{align*}
\mc{H}_{\infty}^{t}(R)\leq (\diam(R))^{t}\lesssim \max(r_1^t,r_2^t).
\end{align*}
Thus, it suffices to show that if $r_1\leq r_2$ then
$$\mc{H}_{\infty}^{t}(R)\lesssim r_1^{t+d-2(n+1)}r_2^{2(n+1)-d} $$
and if $r_1\geq r_2$ then
$$\mc{H}_{\infty}^{t}(R)\lesssim \min \left( r_1^{\frac{t+d}{2}}r_2^{\frac{t-d}{2}}, r_1^{d+1}r_2^{t-d-1}, r_1^{2n+2+d-t}r_2^{2t-(2n+2+d)}     \right) .$$

Suppose first that $r_1\leq r_2$. Then given any points $p\in R$ and $p'\in \overline{B}(p,r_1)$ we have the estimates 
$$|x'^{d}|\leq |x^{d}|+|x'^{d}-x^{d}|\leq 2r_1,\ |(x'^{\perp},y')|\leq r_1+r_2\leq 2r_2 $$
and
\begin{align*}
|z'|&\leq |z'-z-2(\langle x,y'\rangle-\langle y,x'\rangle )|+|z|+2(|x||y'|+|y||x'|)\\
&\leq r_1^2+3r_2^2+2((|x^{d}|+|x^{\perp}|)|y'|+|y|(|x'^{d}|+|x'^{\perp}|)) 
\leq 20r_2^2.
\end{align*}
Thus
$$\bigcup_{p\in R}\overline{B}(p,r_1)\subseteq \{ p'\mid |x'^{d}|\leq 2r_1, |(x'^{\perp},y)|\leq 2r_2, |z'|\leq 20r_2^2\}=:A .$$
Note that the Lebesgue measure of this set is $\lambda(A)\approx r_2^{2(n+1)-d}r_1^{d}$. 

Let $\{ p_i\}_{i=1}^{M}$ be a maximal collection of points with the properties that $p_i\in R$ for each $i$ and $d_{\Hb}(p_i,p_j)\geq r_1$ for $i\neq j$. Then the balls $B(p_i,r_1/2)$ are disjoint and 
$$R\subseteq \bigcup_{i=1}^{M}\overline{B}(p_i,r_1)\subseteq A. $$
Using the fact that $\lambda (B(p,\epsilon))=\lambda (B(0,\epsilon))\approx \epsilon^{2n+2}$ for all $p\in \Hb$ and $\epsilon>0$ we obtain that
\begin{align*}
M\lambda(B(0,r_1/2))&=\lambda \left( \bigcup_{i=1}^{M}B(p_i,r_1/2)\right)\leq \lambda (A)\approx r_2^{2(n+1)-d}r_1^{d} 
\end{align*}
which in turn yields
$$ M\lesssim \frac{r_2^{2(n+1)-d}r_1^{d}}{r_1^{2n+2}}=\left(\frac{r_2}{r_1} \right)^{2(n+1)-d}.$$
Since $R\subseteq \bigcup_{i=1}^{M}\overline{B}(p_i,r_1)$ we obtain the bound
$$\mc{H}_{\infty}^{t}(R)\lesssim \left(\frac{r_2}{r_1} \right)^{2(n+1)-d}r_1^{t}=r_1^{t+d-2(n+1)}r_2^{2(n+1)-d} $$
as desired.

We now consider the case $r_1\geq r_2$. We will first establish the bound 
\begin{equation}\label{eqn:easyupperboundfortype1}
\mc{H}_{\infty}^{t}(R)\lesssim r_1^{\frac{t+d}{2}}r_2^{\frac{t-d}{2}}. 
\end{equation}
To this end, we estimate the distance between a point $p\in R$ and its horizontal projection $(x^{d},0,0)=P_{\mathbb{V}}(p)\in \mathbb{V}$. Observe now that for $p\in R$ it holds that
\begin{align*}
|z-2\langle x^{d},y^{d}\rangle|\leq |z+2\langle x^{d},y^{d}\rangle|+4|\langle x^{d},y^{d}\rangle|\leq r_2^2+4r_1r_2\leq 5r_1r_2.
\end{align*}
Hence we obtain that
\begin{align*}
d_{\Hb}(P_{\mathbb{V}}(p),p)&=\left( |(x^{\perp},y)|^{4}+ |z-2\langle x^{d},y^{d}\rangle|^{2}\right)^{1/4}\leq \left( r_2^4+(5r_1r_2)^{2} \right)^{1/4}\\ &\leq \sqrt[4]{26}(r_1r_2)^{1/2}\leq 3(r_1r_2)^{1/2}.
\end{align*}
Now cover the Euclidean ball $\overline{B}_{\R^{d}}(0,r_1)$ with $$M\lesssim \left(r_1/\sqrt{r_1r_2}\right)^{d}=(r_1/r_2)^{d/2}$$ Euclidean balls $B_{\R^{d}}(p_i',\sqrt{r_1r_2})$. Then the points $p_i:=(p_i',0,0,0)\in \mathbb{V}.$ Given $p\in R$ there exists an index $j\in \{1,\ldots,M\}$ such that 
$$\sqrt{r_1r_2}\geq |x^{d}-(p_j',0)|=d_{\Hb}(P_{\mathbb{V}}(p),p_j) $$
hence
$d_{\Hb}(p,p_j)\leq 4(r_1r_2)^{1/2}.$ This yields the desired bound
$$\mc{H}_{\infty}^{t}(R)\lesssim (r_1/r_2)^{d/2}(r_1r_2)^{t/2}= r_1^{\frac{t+d}{2}}r_2^{\frac{t-d}{2}}. $$

To obtain the remaining upper bounds we will use similar "volume-based"\ arguments as we did in the previous case $r_1\leq r_2$. Consider a radius $\varrho\in [r_2^2/r_1,r_2]$ and suppose that $p\in R$ and $p'\in B(p,\varrho)$. 
Then $|x'^{d}|\leq r_1+\varrho\leq 2r_1$, $|(x'^{\perp},y')|\leq r_2+\varrho\leq 2r_2 $ and
\begin{align*}
|z'+2\langle x'^{d},y'^{d}\rangle|\leq\ & |z'-z-2(\langle x,y'\rangle-\langle y,x' \rangle)|+|z+2\langle x^{d},y^{d}\rangle|\\
&+2|\langle x'^{d},y'^{d}\rangle - \langle x^{d},y^{d}\rangle + \langle x,y'\rangle-\langle y,x' \rangle|\\
\leq\ & \varrho^{2}+r_2^2+2|\langle x'^{d},y'^{d}\rangle - \langle x^{d},y^{d}\rangle + \langle x^{d},y'^{d}\rangle-\langle y^{d},x'^{d} \rangle| \\
&+2|\langle x^{\perp},y'^{\perp}\rangle-\langle y^{\perp},x'^{\perp} \rangle |\\
\leq\ & \varrho^{2}+r_2^2+2|\langle x'^{d}+x^{d},y'^{d}-y^{d}\rangle|\\
&+2(|x^{\perp}||y'^{\perp}|+|y^{\perp}||x'^{\perp}|) \\
\leq\ &\varrho^{2}+r_2^2+6r_1\varrho+8r_2^2\leq 16r_1\varrho.
\end{align*}
Thus
\begin{align*}\bigcup_{p\in R}\overline{B}(p,\varrho)&\subseteq \{ p'\mid |x'^{d}|\leq 2r_1, |(x'^{\perp},y')|\leq 2r_2,|z'+2\langle x'^{d},y'^{d}\rangle| \leq 16r_1\varrho \} \\
&=:A_{\varrho}.
\end{align*}
Note that $\lambda(A_{\varrho})\approx r_1^{d+1}r_2^{2n-d}\varrho$. The argument used above in the case $r_1\leq r_2$ implies that we can cover $R$ with $M$ balls of radius $r_2$, or with $N$ balls of radius $r_2^2/r_1$, where
\begin{align*}
M&\lesssim \frac{r_1^{d+1}r_2^{2n-d}r_2}{r_2^{2n+2}}=\left(\frac{r_1}{r_2}\right)^{d+1}
\end{align*}
and
\begin{align*}
N&\lesssim \frac{r_1^{d+1}r_2^{2n-d}r_2^2/r_1}{(r_2^2/r_1)^{2n+2}}=\left( \frac{r_1}{r_2}\right)^{2n+2+d}.
\end{align*}
Thus
\begin{align*}
\mc{H}_{\infty}^{t}(R)&\lesssim \min\left( \left(\frac{r_1}{r_2}\right)^{d+1}r_2^t, \left( \frac{r_1}{r_2}\right)^{2n+2+d} r_2^{2t}r_1^{-t}   \right)\\
&=\min \left( r_1^{d+1}r_2^{t-d-1}, r_1^{2n+2+d-t}r_2^{2t-(2n+2+d)} \right)
\end{align*}
and the proof of the upper bound for rectangles of type $1$ is complete. 

We now consider the case of rectangles of type $2$. Proceeding as we did in the previous case we may once again assume that the center $p$ is the origin and that $V=\R^{d}\times \{0\}$. The rectangle then takes the form
\begin{align*}
R':=\overline{R}_{2,V}(0,r)&=\left\{ p\mid |x^{d}|\leq r_1,\ |(x^{\perp},y)|^{4}+|z-2\langle x^{d},y^{d}\rangle|^{2}\leq r_2^{4} \right\}.
\end{align*}
Note that by the argument given in the previous case we still have the same trivial bounds as before, that is, 
$\mc{H}_{\infty}^{t}(R')\leq |R'|^{t}\lesssim \max(r_1^t,r_2^t).$ In the case $r_1\leq r_2$ we proceed exactly as we did for the type $1$ rectangles and we find that 
$$\bigcup_{p\in R'}\overline{B}(p,r_1)\subseteq A,$$
where $A$ is the set defined in the type $1$ situation. Thus, we obtain the same upper bound as for the type $1$ rectangles and the case $r_1\leq r_2$ is proved. In the case $r_1\geq r_2$ we follow similar arguments as we did to establish the upper bound (\ref{eqn:easyupperboundfortype1}). For a point $p\in R'$ we have from the definition of $R'$ that 
\begin{align*}
d_{\Hb}(P_{\mathbb{V}}(p),p)&=\left( |(x^{\perp},y)|^{4}+ |z-2\langle x^{d},y^{d}\rangle|^{2}\right)^{1/4}\leq r_2.
\end{align*}
Thus by covering the Euclidean ball $\overline{B}_{\R^{d}}(0,r_1)$ with $M'\lesssim \left(r_1/r_2\right)^{d}$ Euclidean balls $\overline{B}_{\R^{d}}(p_i^{d},r_2)$ we see as before that for any $p\in R'$ there exists an index $i\in \{1,\ldots, M'\}$ such that $d_{\Hb}(p,p_i)\leq 2r_2$, where $p_i$ is defined in the same way as in the proof of (\ref{eqn:easyupperboundfortype1}). This yields the bound 
\begin{align*}
\mc{H}_{\infty}^{t}(R')\lesssim r_1^{d}r_2^{t-d}
\end{align*}
and the proposition is completely proved.
\end{proof}

\begin{prop}\label{prop:lowerboundrectangle}
For every isotropic subspace $V\in G_{h}(n,d)$, a pair of positive numbers $r=(r_1,r_2)$ and for every $t\in ]0,2n+2[\setminus \Z$ we have the following:

If $r_1\geq r_2$, $d=1$ and $t\in ]3,2n+1[\setminus \Z$, then for every $s\in ]\lfloor t\rfloor,t[$ we have that
\begin{align}\label{eqn:lowerboundincased=1}
\capa_{s}(\overline{R}_{1,V}(0,r))\gtrsim \Phi^{t}_{1}(r,d)r_1^{s-t}.
\end{align} 
For all other possible choices of $V\in G_{h}(n,d)$, $j\in \{1,2\}$, $r$ and $t$ we have that
\begin{align}\label{eqn:lowerboundinmostcases}
\capa_{t}(\overline{R}_{j,V}(0,r))\gtrsim \Phi^{t}_{j}(r,d).
\end{align} 
The implicit constants above depend on $n$, $t$, $s$, $j$ and $d$, but not on $r$ or $V\in G_{h}(n,d)$.
\end{prop}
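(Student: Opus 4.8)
The strategy is to construct, for each rectangle $\overline{R}_{j,V}(0,r)$, an explicit probability measure $\mu$ supported on it whose $t$-energy $I_t(\mu)$ is bounded above by a constant times $\Phi^t_j(r,d)^{-1}$ (respectively $\Phi^t_1(r,1)^{-1}r_1^{t-s}$ in the exceptional regime), since $\capa_t(A)\geq I_t(\mu)^{-1}$ for any $\mu\in\mc{P}(A)$. By Lemma~\ref{lemma:hausdorffcontentofrectangleindependentofisotropicsubspace} (and the analogous invariance for energies under the metric-preserving maps $\widetilde{U}$) we may assume $V=\R^d\times\{0\}$, so the rectangle has the explicit coordinate description obtained in the proof of Proposition~\ref{prop:upperboundrectangle}. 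The natural candidate for $\mu$ is the normalised Lebesgue measure restricted to $\overline{R}_{j,V}(0,r)$, i.e.\ $\mu=\lambda(R)^{-1}\lambda|_R$, so that $I_t(\mu)=\lambda(R)^{-2}I_t(R)$; since $\lambda(R)\approx r_1^{d+1}r_2^{2n+1-d}$ in all cases, the whole problem reduces to estimating the double integral $\iint_{R\times R} d_{\Hb}(p,q)^{-t}\,d\lambda(p)\,d\lambda(q)$ from above.

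The main work is a careful \emph{layer-cake / dyadic decomposition} of this energy integral. Fixing $p\in R$, one writes $\int_R d_{\Hb}(p,q)^{-t}\,d\lambda(q)\approx \sum_{k} 2^{kt}\,\lambda\bigl(\{q\in R\mid d_{\Hb}(p,q)\leq 2^{-k}\}\bigr)$, dyadically in $2^{-k}$ ranging from the diameter of $R$ down to $0$, plus the contribution $2^{kt}$ from the innermost scales. The key geometric input is a sharp estimate for $\lambda\bigl(R\cap B(p,\varrho)\bigr)$ as a function of $\varrho$ and the center $p$: for small $\varrho$ this is simply $\approx\varrho^{2n+2}$, but for intermediate $\varrho$ the ball is "squeezed" by the rectangle and the volume is governed by the same thin-slab sets $A_\varrho$ that appeared in the upper-bound proof — for type $1$ rectangles with $r_1\geq r_2$ one has $\lambda(R\cap B(p,\varrho))\lesssim r_1^{d+1}r_2^{2n-d}\varrho$ in the range $\varrho\in[r_2^2/r_1,r_2]$, with different behaviour in the ranges $\varrho\leq r_2^2/r_1$ and $\varrho\geq r_2$. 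Summing the resulting geometric series and multiplying by $\lambda(R)^{-2}$ gives $I_t(\mu)$, and the exponent bookkeeping should reproduce exactly the piecewise definitions of $\Phi^t_1$ and $\Phi^t_2$, the different pieces corresponding to which dyadic range dominates the sum; when $t$ falls in the range where the sum $\sum_k 2^{kt}\lambda(R\cap B(p,2^{-k}))$ has its dominant term at the transition scale, one gets a clean power of $\Phi^t_j(r,d)$, and the worst constant is controlled uniformly in $p$.

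The delicate point — and the reason for the exceptional clause in the statement — is the type $1$ case with $r_1\geq r_2$ in the plateau range $t\in\,]d+2,2n+1[$, where $\Phi^t_1(r,d)=r_1^{d+1}r_2^{t-d-1}$. Here the dyadic sum $\sum_k 2^{kt}\lambda(R\cap B(p,2^{-k}))$ over the slab range contributes $\sum_{2^{-k}\in[r_2^2/r_1,r_2]} 2^{k(t-1)} r_1^{d+1}r_2^{2n-d}$, and since $t-1>0$ this geometric series is dominated by its \emph{smallest}-radius term $2^{-k}\approx r_2^2/r_1$, which matches (up to constants) the contribution of the range $\varrho\leq r_2^2/r_1$ — a genuinely critical balance that only behaves well when the integer part structure cooperates. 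This is why one retreats to an exponent $s<t$ with $\lfloor s\rfloor=\lfloor t\rfloor$: replacing $t$ by $s$ introduces the harmless slack factor $r_1^{s-t}$ and makes the relevant series unambiguously convergent/summable with the right dominant term, at the cost of the slightly weaker bound \eqref{eqn:lowerboundincased=1}; this is enough because in the application (via Theorem~\ref{theorem1.3}) one takes an infimum over $t$ anyway. I expect the main obstacle to be precisely this uniform-in-$p$ control of the slab volumes $\lambda(R\cap B(p,\varrho))$ across all dyadic scales and the verification that the exponent arithmetic in each regime collapses to the stated $\Phi^t_j$; the type $2$ case and the case $r_1\leq r_2$ should be strictly easier, as the rectangles are "metrically honest" and the volume estimates are the straightforward ones.
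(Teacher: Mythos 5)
Your high-level strategy coincides with the paper's: take $\mu=\lambda(R)^{-1}\lambda_{|_R}$, use $\capa_t(R)\geq \lambda(R)^2/I_t(R)$, reduce to $V=\R^d\times\{0\}$ by orthosymplectic invariance, and estimate $\int_R d_{\Hb}(p,q)^{-t}\,d\lambda(q)$ by a layer-cake decomposition of $\lambda(R\cap B(p,\varrho))$. However, there are concrete gaps in the execution. First, $\lambda(R)\approx r_1^{d}r_2^{2n+2-d}$, not $r_1^{d+1}r_2^{2n+1-d}$: you have computed the volume of the slab neighbourhood $A_{r_2}$ rather than of $R$ itself (in $R$ the quantity $z+2\langle x^d,y^d\rangle$ is confined to an interval of length $\approx r_2^2$, not $\approx r_1r_2$). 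Second, and more seriously, your proposed key geometric input for type $1$ with $r_1\geq r_2$, namely $\lambda(R\cap B(p,\varrho))\lesssim r_1^{d+1}r_2^{2n-d}\varrho$ on $[r_2^2/r_1,r_2]$, is vacuous: it is weaker than the trivial bound $\varrho^{2n+2}$ throughout that range (the two cross only at $\varrho=r_1^{(d+1)/(2n+1)}r_2^{(2n-d)/(2n+1)}\geq r_2$) and also weaker than $\lambda(R\cap B(p,\varrho))\leq\lambda(R)$ for $\varrho\geq r_2^2/r_1$. More structurally, no bound on $\lambda(R\cap B(p,\varrho))$ that is uniform in $p$ can work here: the $p$-independent estimates $\min(\varrho^{2n+2},\varrho^dr_2^{2n+2-d},\lambda(R))$ only yield $\capa_t(R)\gtrsim r_1^dr_2^{t-d}=\Phi_2^t(r,d)$, which for $t>d$ and $r_1>r_2$ is strictly smaller than the required $\Phi_1^t(r,d)$. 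The essential missing ingredient is a position-dependent volume bound of the form $\lambda(R\cap B(p,a))\lesssim \min\bigl(a^{2n+1}r_2^2,\,a^{d+2}r_2^{2n+1-d}\bigr)/|\rho^d|$ for $a\leq|\rho^d|$ (where $\rho^d$ is the $V$-component of the centre $p$), which quantifies how the twisting of the type $1$ rectangle squeezes balls centred far from its core; one must then integrate the resulting bound on $\int_R d_{\Hb}(p,q)^{-t}d\lambda(q)$ over the position of $p$ in $R$. This two-step structure is what actually produces the three regimes $]d,d+2[$, $]d+2,2n+1[$, $]2n+1,2n+2[$ of $\Phi_1^t$.

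Relatedly, your diagnosis of the exceptional clause is off. The logarithm in the case $d=1$, $t\in\,]3,2n+1[$ does not come from a critically balanced dyadic series in the scale variable; it comes from the spatial integral over the centre, $\int_{r_2}^{r_1}\tau^{d-2}\,d\tau$, which equals $\log(r_1/r_2)$ when $d=1$ but is $\approx r_1^{d-1}$ when $d\geq 2$ (which is why no exception is needed for $d\geq2$). The restriction $s\in\,]\lfloor t\rfloor,t[$ is not about making a series converge; it merely guarantees that $s$ and $t$ lie in the same piece of the piecewise-defined $\Phi_1$, so that $\Phi_1^s(r,1)=\Phi_1^t(r,1)r_2^{s-t}$, after which one absorbs the logarithm by minimising $x\mapsto x^{t-s}\log(2r_1/x)$ to get $r_2^{s-t}/\log(2r_1/r_2)\gtrsim r_1^{s-t}$. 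Finally, note that the cases $r_1\leq r_2$ and type $2$ are indeed easier, as you anticipate, but even there the correct value of $\lambda(R)$ is needed for the exponent bookkeeping to close.
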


\begin{proof}
As in the proof of Proposition~\ref{prop:upperboundrectangle} we begin by considering the more difficult case of type $1$ rectangles. We will obtain the desired results for the rectangles of type $2$ at the end of the proof.

Let $R=\overline{R}_{1,V}(0,r)$. Directly from the definition of capacity we obtain that
$$\capa_t(R)\geq \frac{\lambda(R)^2}{I_t(R)}.$$
Recall that $d_{\Hb}$ and $\lambda$ are invariant under the map $\widetilde{U}\colon \Hb\to\Hb$ induced by $U\in U(n)$. Also, we recall from the proof of Proposition~(\ref{prop:upperboundrectangle}) that $U(n)$ acts transitively on $G_{h}(n,d)$ and if $U\in U(n)$ maps $V'$ to $V$, then the induced map $\widetilde{U}$ maps $\overline{R}_{1,V'}(0,r)$ to $\overline{R}_{1,V}(0,r)$. Thus, to estimate the quantity $\frac{\lambda(R)^2}{I_t(R)}$, we may again assume that $V=\R^{d}\times \{0\}$. The rectangle is then again the set
\begin{align*}
R&=\left\{ p\mid |x^{d}|\leq r_1,\ |(x^{\perp},y)|^{4}+|z+2\langle x^{d},y^{d}\rangle|^{2}\leq r_2^{4} \right\}.
\end{align*}
Observe that for any fixed $x^d$ with $|x^{d}|\leq r_1$, the slice 
$$\{ p'\in R\mid x'^{d}=x^{d}\} $$
has Lebesgue measure comparable to $r_2^{2n+2-d}$. Hence, Fubini's theorem implies that $\lambda(R)\approx r_1^{d}r_2^{2n+2-d}$. 
We thus obtain the bound
\begin{equation}\label{eqn:capacityatleastsomethingoverenergy}
\capa_t(R)\gtrsim \frac{r_1^{2d}r_2^{4n+4-2d}}{I_t(R)}.
\end{equation}

In the following, we will show that if $r_1\leq r_2$, then 
\begin{align*}
I_t(R)\lesssim \begin{cases}
r_1^{2d}r_2^{4n+4-2d-t}& \text{ if } t\in ]0,2n+2-d[\setminus \Z,\\
r_1^{2n+2+d-t}r_2^{2n+2-d}& \text{ if } t\in ]2n+2-d,2n+2[\setminus \Z,
\end{cases}
\end{align*}
and if $r_1\geq r_2$, then 
\begin{align*}
I_t(R)\lesssim \begin{cases}
r_1^{2d-t}r_2^{4n+4-2d}& \text{ if } t\in ]0,d[\setminus\Z,\\
r_1^{\frac{3d-t}{2}}r_2^{\frac{8n+8-3d-t}{2}}& \text{ if } t\in ]d,d+2[\setminus\Z,\\
r_1^{d-1}r_2^{4n+5-d-t}& \text{ if } t\in ]d+2,2n+1[\setminus\Z \text{ and } d\geq 2,\\
r_2^{4n+4-t}\log\left( \frac{2r_1}{r_2}\right)& \text{ if } t\in ]3,2n+1[\setminus\Z\text{ and } d=1,\\
r_1^{t+d-2n-2}r_2^{6n+6-d-2t}& \text{ if } t\in ]2n+1,2n+2[.
\end{cases}
\end{align*}
Apart from the bound involving the logarithm, the rest of the above bounds combined with (\ref{eqn:capacityatleastsomethingoverenergy}) clearly imply (\ref{eqn:lowerboundinmostcases}). We show that (\ref{eqn:lowerboundincased=1}) follows from the logarithm bound at the end of this proof. 

Note that $d_{\Hb}$ and $\lambda$ are invariant under the map $\widetilde{U}\colon \Hb\to\Hb$ induced by $U\in U(n)$. Recall from the proof of Proposition~(\ref{prop:upperboundrectangle}) that $U(n)$ acts transitively on $G_{h}(n,d)$ and if $U\in U(n)$ maps $V'$ to $V$, then the induced map $\widetilde{U}$ maps $\overline{R}_{1,V'}(0,r)$ to $\overline{R}_{1,V}(0,r)$. Thus, to estimate $I_t(R)$, we may again assume that $V=\R^{d}\times \{0\}$. 
Let $$R_t(p)=\int_{R}d_{\Hb}(p,q)^{-t}\, \lambda(q)$$
so that 
$I_t(R)=\int_{R}R_t(p)\, d\lambda(p)$.
Let $p=(\rho,y_0,z_0)\in R$ and for $q=(x,y,z)$ let
\begin{align*}
&g_p(q)\\
&=\max\left( |x^{d}-\rho^{d}|, |(x^{\perp},y)-(\rho^{\perp},y_0)|,|z-z_0-2(\langle \rho, y \rangle-\langle y_0,x \rangle ) |^{1/2} \right). 
\end{align*}
Note that
$$R\subseteq \{ q\mid |x^d|\leq r_1, |(x^{\perp},y)|\leq r_2, |z+2\langle x^d,y^d\rangle|\leq r_2^2\}=:A. $$
For $a\geq 0$ define $B(a):=\{ q\mid g_p(q)\leq a\}$. Since $g_p(q)\approx d_{\Hb}(p,q)$, we obtain that
\begin{align*}
R_t(p)&\lesssim \int_{A} g_p(q)^{-t}\, d\lambda(q)=\int_{0}^{\infty}\lambda(A\cap \{ q\mid g_p(q)^{-t}\geq a\})\, da\\
&\approx \int_{0}^{\infty}\lambda(A\cap B(a))a^{-(t+1)}da.
\end{align*}
We must now estimate the Lebesgue measure of the set 
\begin{align*}
&A\cap B(a)\\
&=\left\{ q\, \bigg| \begin{array}{cl}
&|x^d|\leq r_1, |(x^\perp,y)|\leq r_2, |z+2\langle x^d,y^d\rangle|\leq r_2^2, |x^d-\rho^d|\leq a,\\ &|(x^\perp,y)-(\rho^\perp,y_0)|\leq a,|z-z_0-2(\langle \rho, y\rangle-\langle y_0,x\rangle) |\leq a^2 
\end{array} \right\}.
\end{align*}

Suppose first that $r_1\leq r_2$ and let $t\in ]0,2n+2[\setminus \Z$. 
Note that for any fixed $(x,y)$, the $z$-coordinate of a point $q\in A\cap B(a)$ must come from an interval of length $\min(2r_2^2,2a^2)$. Taking into account the constraints on the first $2n$-coordinates, Fubini's theorem yields
\begin{align*}
\lambda(A\cap B(a))&\lesssim \min\left( a^2 a^{2n-d}a^{d},a^{2}a^{2n-d}r_1^{d},r_2^2r_2^{2n-d}r_1^{d} \right)\\
&=\min \left( a^{2n+2}, r_1^{d}a^{2n+2-d},r_1^{d}r_2^{2n+2-d} \right).
\end{align*}
We thus obtain
\begin{align*}
R_t(p)&\lesssim \int_{0}^{r_1}a^{2n+2}a^{-(t+1)}\, da+\int_{r_1}^{r_2}r_1^{d}a^{2n+2-d}a^{-(t+1)}\, da\\
&\quad +\int_{r_2}^{\infty}r_1^{d}r_2^{2n+2-d}a^{-(t+1)}\, da\\
&\lesssim \max\left( r_1^{2n+2-t}, r_1^dr_2^{2n+2-d-t} \right).
\end{align*}
Hence
\begin{align*}
I_t(R)&\lesssim \lambda(R)\max\left( r_1^{2n+2-t}, r_1^dr_2^{2n+2-d-t} \right)\\
&\approx \max\left( r_1^{2n+2+d-t}r_2^{2n+2-d},r_1^{2d}r_2^{4n+4-2d-t}\right)\\
&=\begin{cases}
r_1^{2d}r_2^{4n+4-2d-t} &\text{ if } t\leq 2n+2-d,\\
r_1^{2n+2+d-t}r_2^{2n+2-d} &\text{ if } t\geq 2n+2-d.
\end{cases}
\end{align*}
The case $r_1\leq r_2$ is thus proved. 

Suppose then that $r_1\geq r_2$ and let $t\in ]0,2n+2[$. In this case we need a more careful estimate for the Lebesgue measure of 
$A\cap B(a).$ We collect the required upper bounds in the following sublemma.

\begin{sublemma}\label{sublemma1}
For every $a\geq 0$ we have that 
$$\lambda(A\cap B(a))\lesssim \min \left( a^{2n+2}, a^{d}r_2^{2n+2-d}, r_1^{d}r_2^{2n+2-d}\right). $$
If $0<a\leq |\rho^{d}|$, then we also have the bound
$$\lambda(A\cap B(a))\lesssim \min\left( \frac{a^{2n+1}r_2^2}{|\rho^{d}|}, \frac{a^{d+2}r_2^{2n+1-d}}{|\rho^{d}|} \right). $$
\end{sublemma}

\begin{proof}
In the proof of this lemma, to make reading less cumbersome, we deviate from our usual notation slightly and we write $x^{d}=(x_1,\ldots,x_d)\in \R^{d}$ instead of $x^d=(x_1,\ldots,x_d,0,\ldots,0)\in \R^n$ and similarly we write
$x^{\perp}=(x_{d+1},\ldots ,x_n)\in \R^{n-d}$.

For every $a\geq 0$, we note as before that for any fixed first $2n$ coordinates $(x,y)$, the $z$-coordinate of a point $q\in A\cap B(a)$ must belong to the intersection of two intervals of lengths $2r_2^2$ and $2a^2$, respectively. Noting also that then
$$x^{d}\in \overline{B}_{\R^d}(0,r_1)\cap \overline{B}_{\R^d}(\rho^{d},a)  $$
and
$$(x^{\perp},y)\in \overline{B}_{\R^{2n-d}}(0,r_2)\cap \overline{B}_{\R^{2n-d}}((\rho^{\perp},y_0),a),$$
Fubini's theorem yields
\begin{align*}
\lambda(A\cap B(a))&\lesssim \min\left( a^{2n+2}, a^{d}r_2^{2n+2-d},r_1^{d}r_2^{2n+2-d} \right).
\end{align*}
Suppose then that $a\leq |\rho^{d}|$. Assume first that $\rho^{d}=(|\rho^{d}|,0,\ldots,0)\in \R^d$ so that $\langle \rho , y\rangle= |\rho^{d}|y_1$ and write 
$$\widehat{x}=(x_2,\ldots, x_d)\in \R^{d-1}.$$ Observe that if $q=(x,y,z)\in A\cap B(a)$, then
\begin{align}\label{eqn:betterboundforrho1}
-a^2 -z_0-2(\langle \rho, y \rangle-\langle y_0,x \rangle)\leq -z \leq a^2 - z_0-2(\langle \rho, y \rangle-\langle y_0,x \rangle)
\end{align}
and, on the other hand,
\begin{align}\label{eqn:betterboundforrho2}
-r_2^2-z\leq 2\langle x^{d},y^{d}\rangle\leq r_2^2-z .
\end{align}
Substituting (\ref{eqn:betterboundforrho1}) into (\ref{eqn:betterboundforrho2}) and subtracting the term $2\langle \widehat{x},\widehat{y} \rangle$, we obtain
\begin{align*}
-&r_2^2-a^2 -z_0-2(\langle \rho, y \rangle-\langle y_0,x \rangle + \langle \widehat{x},\widehat{y} \rangle)\\ 
\leq &2x_1y_1\\ 
\leq &r_2^2+a^2 - z_0-2(\langle \rho, y \rangle-\langle y_0,x \rangle+ \langle \widehat{x},\widehat{y} \rangle).
\end{align*}
Write $$C=C(x,\widehat{y},y^{\perp}):=-z_0-2(\langle \rho^{\perp}, y^{\perp} \rangle-\langle y_0,x \rangle + \langle \widehat{x},\widehat{y} \rangle) .$$
Noting that $\langle \rho,y \rangle=|\rho^{d}|y_1+\langle \rho^{\perp},y^{\perp}\rangle$ yields
\begin{align}\label{eqn:betterboundforrho3}
C-r_2^2-a^2 \leq 2(x_1+|\rho^{d}|)y_1 \leq C+r_2^2+a^2.
\end{align}
Since $q\in A\cap B(a)$ we have that $|x_1-|\rho^{d}||\leq |x^{d}-\rho^{d}|\leq a$, and by our assumption $a\leq |\rho^{d}|$ we obtain
\begin{align*}
0<|\rho^{d}|\leq 2|\rho^{d}|-a\leq x_1+|\rho^{d}|.
\end{align*}
From (\ref{eqn:betterboundforrho3}) we now obtain
\begin{align*}
\frac{C-r_2^2-a^2}{2(x_1+|\rho^{d}|)}\leq y_1\leq \frac{C+r_2^2+a^2}{2(x_1+|\rho^{d}|)}.
\end{align*}
Thus for every point in $A\cap B(a)$ with the coordinates $x$, $\widehat{y}$ and $y^{\perp}$ fixed, the $y_1$ coordinate must belong to the same interval of length 
$$\frac{r_2^2+a^2}{x_1+|\rho^{d}|}\leq \frac{r_2^2+a^2}{|\rho^{d}|}.$$ By Fubini's theorem
\begin{align*}
&\lambda(A\cap B(a))=\\ 
&\int \int \int \int \chi_{A\cap B(a)}(x,y,z)\, d\mc{L}^{1}(z)\, d\mc{L}^{1}(y_1)\, d\mc{L}^{d}(x^{d})\, d\mc{L}^{2n-d-1}(x^{\perp},\widehat{y},y^{\perp})\\
&\lesssim \min\left( a^2,r_2^2 \right)\frac{r_2^2+a^2}{|\rho^{d}|}\min \left( a^d,r_1^d \right)\min\left( a^{2n-d-1},r_2^{2n-d-1} \right)\\
&\lesssim \frac{r_2^2a^2}{|\rho^{d}|}\min \left( a^d,r_1^d \right)\min\left( a^{2n-d-1},r_2^{2n-d-1}\right)\\
&\leq \min \left(\frac{r_2^2a^{2n+1}}{|\rho^{d}|}, \frac{a^{d+2}r_2^{2n+1-d}}{|\rho^{d}|} \right).
\end{align*}
This completes the proof in the case $\rho^{d}=(|\rho^{d}|,0,\ldots ,0)$. It remains to see that this is indeed  sufficient. 

Given an arbitrary $\rho^{d}\in \R^{d}\setminus \{0\}$ there exists an orthogonal map $O\in O(d)$ with the property that $O(\rho^{d})=(|\rho^{d}|,0\ldots,0)$.  This induces a map $\widetilde{O}\in O(2n+1)$ via the formula
$$\widetilde{O}(x,y,z)=(O(x^d),x^{\perp},O(y^d),y^{\perp},z). $$
As the map $\widetilde{O}$ preserves the norms and inner products appearing in the definition of $A\cap B(a)$ we have that
$$A\cap B(a)=\widetilde{O}^{-1}\left(A\cap \{ q\mid g_{\widetilde{O}(p)}(q)\leq a\} \right). $$
Since $\lambda$ is invariant under $\widetilde{O}$ it follows that
$$\lambda(A\cap B(a))=\lambda(A\cap \{ q\mid g_{\widetilde{O}(p)}(q)\leq a\} ).$$
This completes the proof since the first coordinate of $\widetilde{O}(p)$ is $|\rho^{d}|$ and the following $(d-1)$ coordinates are all equal to zero. 
\end{proof}
We now continue with the proof of the proposition. By Sublemma~\ref{sublemma1}, it is true for every $p\in R$ that
\begin{align}\label{eqn:upperboundrtpforallrho}
R_t(p)\lesssim& \int_{0}^{r_2}a^{2n+2-(t+1)}\, da+ \int_{r_2}^{r_1}r_2^{2n+2-d} a^{d-(t+1)}\,da\nonumber\\
&+\int_{r_1}^{\infty}r_1^{d}r_2^{2n+2-d}a^{-(t+1)}\, da \nonumber\\
\lesssim& \max\left( r_2^{2n+2-t}, r_2^{2n+2-d}r_1^{d-t} \right)\nonumber\\
=&\begin{cases}
r_2^{2n+2-d}r_1^{d-t}& \text{ if } t<d,\\
r_2^{2n+2-t}& \text{ if } t>d.
\end{cases}
\end{align}
For $0<t<d$ we thus obtain
\begin{align*}
I_t(R)&\lesssim \lambda(R)r_2^{2n+2-d}r_1^{d-t}\approx r_1^{d}r_2^{2n+2-d}r_2^{2n+2-d}r_1^{d-t}=r_1^{2d-t}r_2^{4n+4-2d}
\end{align*}
as desired. Suppose then that $t>d$. Observe that if $|\rho^{d}|\geq r_2$, then $$r_2^2/|\rho^{d}|\leq r_2\leq \sqrt{r_2|\rho^{d}|}$$ and for $a\in \left[r_2^2/|\rho^{d}|,\sqrt{r_2|\rho^{d}|}\right]$ it is true that $a\leq |\rho^{d}|$. For such $|\rho^{d}|$, Sublemma~\ref{sublemma1} yields that
\begin{align}\label{eqn:upperboundrtpforlargerho}
R_t(p)\lesssim &\int_{0}^{r_2^2/|\rho^{d}|}a^{2n+2-(t+1)}\, da +\int_{r_2^2/|\rho^{d}|}^{r_2}\frac{r_2^2a^{2n+1-(t+1)}}{|\rho^{d}|}\,da\nonumber\\
&+ \int_{r_2}^{\sqrt{r_2|\rho^{d}|}}\frac{a^{d+2-(t+1)}r_2^{2n+1-d}}{|\rho^{d}|}\, da+\int_{\sqrt{r_2|\rho^{d}|}}^{r_1} a^{d-(t+1)}r_2^{2n+2-d}\, da\nonumber\\
&+ \int_{r_1}^{\infty}r_1^{d}r_2^{2n+2-d}a^{-(t+1)}\, da\nonumber\\
\lesssim &\max\left( \left(\frac{r_2^2}{|\rho^{d}|} \right)^{2n+2-t}, \frac{r_2^{2n+3-t}}{|\rho^{d}|}, r_2^{2n+2-d}(r_2|\rho^{d}|)^{\frac{d-t}{2}}, r_2^{2n+2-d}r_1^{d-t} \right)\nonumber\\
\mathrel{\overset{\ast}{=}} &\begin{cases}
r_2^{2n+2-d}(r_2|\rho^{d}|)^{\frac{d-t}{2}}& \text{ if } d<t<d+2,\\
\frac{r_2^{2n+3-t}}{|\rho^{d}|}& \text{ if } d+2<t<2n+1,\\
\left(\frac{r_2^2}{|\rho^{d}|} \right)^{2n+2-t}& \text{ if } 2n+1<t<2n+2.
\end{cases}
\end{align}
To see that the equality $\ast$ holds, denote the options in the maximum on the second-to-last line by $O_1,O_2,O_3$ and $O_4$, respectively. Note first that $O_4\geq O_3$ is equivalent to $t\leq d$ so the last term is never the largest one for $t>d$. To compare the rest of the terms we calculate
\begin{align*}
\frac{O_3^2}{O_2^2}=\frac{r_2^{4n+4-2d+d-t}|\rho^{d}|^{d-t}}{r_2^{4n+6-2t}|\rho^{d}|^{-2}}=\left( \frac{r_2}{|\rho^{d}|} \right)^{t-d-2}\leq 1 \Leftrightarrow t\geq d+2.
\end{align*}
Thus $O_2\geq O_3$ is equivalent to $t\geq d+2$. We also have that
\begin{align*}
\frac{O_1}{O_2}&=\frac{r_2^{2(2n+2-t)}|\rho^{d}|^{t-2n-2}}{r_2^{2n+3-t}|\rho^{d}|^{-1}}=\left(\frac{r_2}{|\rho^{d}|} \right)^{2n+1-t}\leq 1 \Leftrightarrow t\leq 2n+1
\end{align*}
hence the inequality $O_2\geq O_1$ is equivalent to $t\leq 2n+1$. Combining these estimates we obtain the last equality in (\ref{eqn:upperboundrtpforlargerho}). 

For $\tau\in [0,r_1]$, let $h_t(\tau)=\sup_{|\rho^{d}|=\tau}R_t(p)$ so that 
$$I_t(R)\lesssim r_2^{2n+2-d}\int_{0}^{r_1}h_t(\tau)\tau^{d-1}\, d\tau. $$
If $d<t<d+2$, we obtain from (\ref{eqn:upperboundrtpforallrho}) and (\ref{eqn:upperboundrtpforlargerho}) that
\begin{align*}
I_t(R)&\lesssim r_2^{2n+2-d}\left( \int_{0}^{r_2}r_2^{2n+2-t}\tau^{d-1}\, d\tau+\int_{0}^{r_1}r_2^{\frac{4n+4-d-t}{2}}\tau^{\frac{3d-t}{2}-1}\, d\tau \right)\\
&\lesssim \max \left(r_2^{4n+4-t},r_2^{\frac{8n+8-3d-t}{2}}r_1^{\frac{3d-t}{2}} \right)=r_2^{\frac{8n+8-3d-t}{2}}r_1^{\frac{3d-t}{2}},
\end{align*}
where the last equality follows from
\begin{align*}
r_2^{\frac{8n+8-3d-t}{2}}r_1^{\frac{3d-t}{2}}=r_2^{4n+4-t}\left( \frac{r_1}{r_2}\right)^{\frac{3d-t}{2}}
\end{align*}
and the facts that $r_1\geq r_2$ and $t<d+2\leq 3d$. If $2n+1<t<2n+2$, the estimates (\ref{eqn:upperboundrtpforallrho}) and (\ref{eqn:upperboundrtpforlargerho}) yield
\begin{align*}
I_t(R)&\lesssim r_2^{2n+2-d}\int_{0}^{r_2}r_2^{2n+2-t}\tau^{d-1}\, d\tau+ r_2^{2n+2-d}\int_{0}^{r_1}\frac{r_2^{2(2n+2-t)}}{\tau^{2n+2-t}}\tau^{d-1}\,d\tau\\
&\lesssim r_2^{4n+4-t}+r_2^{6n+6-d-2t}r_1^{t+d-2n-2}\\
&\lesssim r_2^{6n+6-d-2t}r_1^{t+d-2n-2},
\end{align*}
where the last estimate can be seen by calculating
\begin{align*}
\frac{r_2^{6n+6-d-2t}r_1^{t+d-2n-2}}{r_2^{4n+4-t}}=\left(\frac{r_2}{r_1} \right)^{2n+2-d-t}\geq 1,
\end{align*}
since $r_2\leq r_1$ and $t>2n+1\geq 2n+2-d$. 

If $d+2<t<2n+1$ and $d\geq 2$, we obtain that
\begin{align*}
I_t(R)&\lesssim r_2^{4n+4-t}+ r_2^{2n+2-d}\int_{0}^{r_1}\frac{r_2^{2n+3-t}}{\tau}\tau^{d-1}\, d\tau \\
&\lesssim \max\left(r_2^{4n+4-t}, r_2^{4n+5-d-t}r_1^{d-1} \right)= r_2^{4n+5-d-t}r_1^{d-1}.
\end{align*}
Combining all the upper bounds for $I_t(R)$ obtained thus far, we have completely proved the estimate (\ref{eqn:lowerboundinmostcases}). 

We now consider the case $d=1$ and $t\in ]3,2n+1[\setminus \Z$. Once again, from (\ref{eqn:upperboundrtpforallrho}) and (\ref{eqn:upperboundrtpforlargerho}) we obtain
\begin{align*}
I_t(R)&\lesssim r_2^{4n+4-t}+ r_2^{2n+2-1}\int_{r_2}^{r_1}\frac{r_2^{2n+3-t}}{\tau}\, d\tau \\
&=r_2^{4n+4-t}+ r_2^{4n+4-t}\log\left( \frac{r_1}{r_2}\right)\\
&\lesssim r_2^{4n+4-t}\log \left( \frac{2r_1}{r_2}\right).
\end{align*}
This yields
\begin{align*}
\capa_{t}(R)\gtrsim \frac{\Phi^t_{1}(r,1)}{\log \left( \frac{2r_1}{r_2}\right)}.
\end{align*}
Thus if $s\in ]\lfloor t \rfloor,t[$, we have that
\begin{align*}
\capa_{s}(R)\gtrsim \frac{\Phi^s_{1}(r,1)}{\log \left( \frac{2r_1}{r_2}\right)}=\frac{r_1^2r_2^{s-2}}{\log \left( \frac{2r_1}{r_2}\right)}=\Phi^t_{1}(r,1)\frac{r_2^{s-t}}{\log \left( \frac{2r_1}{r_2}\right)}.
\end{align*}
Consider the function $f\colon ]0,r_1[\to \R$, $f(x)=x^{t-s}\log(2r_1/x)$. Note that
\begin{align*}
f'(x)
&=x^{t-s-1}\left( (t-s)\log(2r_1/x)-1 \right)
\end{align*}
and the zero of the derivative is $$2e^{1/(s-t)}r_1=:c_0r_1.$$ Since $t-s-1<0$, the derivative $f'$ is strictly decreasing and thus the function $f$ attains its maximum value at $c_0r_1$ and we obtain the bound
\begin{align*}
\capa_{s}(R)\gtrsim \frac{\Phi^t_{1}(r,1)}{(c_0r_1)^{t-s}\log\left( \frac{2r_1}{c_0r_1}\right)}=\frac{\Phi^t_{1}(r,1)}{(c_0r_1)^{t-s}}(t-s)\gtrsim \Phi^t_{1}(r,1)r_1^{s-t}.
\end{align*}
This completes the proof of (\ref{eqn:lowerboundincased=1}) and thus also the proof of the proposition for rectangles of type $1$. 

It remains to obtain the bound (\ref{eqn:lowerboundinmostcases}) for type $2$ rectangles.  Write $R'=\overline{R}_{2,V}(0,r)$. 
It suffices to show that if $r_1\leq r_2$, the same bound as in the type $1$ case holds. If $r_1\geq r_2$ it suffices to show that
\begin{align*}
I_t(R')\lesssim \begin{cases}
r_1^{2d-t}r_2^{4n+4-2d}& \text{ if } t\in ]0,d[\setminus\Z,\\
r_1^{d}r_2^{4n+4-d-t}& \text{ if } t\in ]d,2n+2[\setminus\Z,\\
\end{cases}
\end{align*}

 We may once again assume that $V=\R^{d}\times \{0\}$. We proceed exactly as in the previous case of type $1$ rectangles with the only difference being that we replace the condition $|z+2\langle x^d,y^d\rangle|$ with $|z-2\langle x^d,y^d\rangle| $ in the definition of the set $A$. Following the arguments used in the type $1$ case we immediately obtain the desired bound in the case $r_1\leq r_2$, since this bound is the same as before. 

In the case $r_1\geq r_2$, to estimate the measure $\lambda(A\cap B(a))$ we only need the trivial bound 
$$\lambda(A\cap B(a))\lesssim \min\left(a^{2n+2}, a^{d}r_2^{2n+2-d},r_1^{d}r_2^{2n+2-d} \right) $$ 
which is obtained in exactly the same way as in the proof of the type $1$ bounds. This yields
\begin{align*}
R_t(p)\lesssim& \int_{0}^{r_2}a^{2n+2-(t+1)}  \, da+\int_{r_2}^{r_1} a^{d-(t+1)}r_2^{2n+2-d}\, da\\
&+\int_{r_1}^{\infty}r_1^{d}r_2^{2n+2-d}a^{-(t+1)} \, da\\
\lesssim& \max\left( r_2^{2n+2-t},r_2^{2n+2-d}r_1^{d-t} \right)\\
=&
\begin{cases}
r_2^{2n+2-d}r_1^{d-t}& \text{ if } t\in ]0,d[\setminus \Z,\\
r_2^{2n+2-t}& \text{ if } t\in ]d,2n+2[\setminus \Z. 
\end{cases}
\end{align*}
From this we obtain that
\begin{align*}
I_t(R')&\lesssim \lambda(R')\sup_{p\in R'}R_t(p)\lesssim \begin{cases}
r_1^{2d-t}r_2^{4n+4-2d}& \text{ if } t\in ]0,d[\setminus \Z,\\
r_1^{d}r_2^{4n+4-d-t}& \text{ if } t\in ]d,2n+2[\setminus \Z 
\end{cases}
\end{align*}
and the proof of the proposition is complete.
\end{proof}

\section{Proof of Theorem \ref{theorem1.1}}\label{section:proofofmainresult}

To complete the proof of Theorem \ref{theorem1.1} we make use of the following theorem. For a proof of this, see \cite[Theorem~1.3]{ekstrometal}.

\begin{theorem}\label{theorem1.3}
Let $G$ be a unimodular group with left-invariant metric and $(c,d)$-regular Haar measure $\lambda$, and let $W\subseteq G$ be a bounded open set such that $\lambda(W)=1$. Define the probability space $(\Omega,\mathbb{P})$ by $\Omega=G^{\N}$ and $\mathbb{P}=(\lambda_{|_{W}})^{\N}$. Let $(V_k)$ be a bounded sequence of open subsets of $G$. For $\omega=(\omega_k)\in \Omega$, let 
$$ E(\omega)=\limsup_{k\to \infty}(\omega_kV_k).$$
If $t\in ]0,d[$ and 
$$ \sum_{k}\capa_t(V_k)=\infty$$
then there exists $g\in G$ so that almost every $\omega$ is such that $\mc{H}^{t}(\mc{U}\cap E(\omega))=\infty$ for every open subset $\mc{U}$ of $Wg$.
\end{theorem}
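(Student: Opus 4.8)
The plan is to establish the lower bound via the potential-theoretic mass distribution method, adapted to the unimodular group $G$; write $\varrho$ for its left-invariant metric and $L_g\colon h\mapsto gh$ for left translation. The goal I would aim at is a \emph{per-ball} statement: for every ball $\mc{U}\subseteq Wg$, almost surely there is a (sub)probability measure $\sigma_{\mc U}$ carried by $\mc{U}\cap E(\omega)$ with
\[
\sigma_{\mc U}(\mc U)\gtrsim\lambda(\mc U)\approx(\diam\mc U)^{d},\qquad I_t(\sigma_{\mc U})\lesssim(\diam\mc U)^{2d-t}.
\]
Granting this, the energy form of Frostman's lemma (restrict $\sigma_{\mc U}$ to the set where its $t$-potential is at most $2I_t(\sigma_{\mc U})/\sigma_{\mc U}(\mc U)$, which by Chebyshev carries half the mass, and apply the mass distribution principle there) yields $\mc{H}^t(\mc U\cap E)\gtrsim\sigma_{\mc U}(\mc U)^2/I_t(\sigma_{\mc U})\gtrsim(\diam\mc U)^{t}$. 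The jump from positive to \emph{infinite} measure then comes for free from $t<d$: partitioning a given open set into $\approx\delta^{-d}$ disjoint balls of diameter $\delta$, each contributing $\gtrsim\delta^{t}$, and using countable additivity of $\mc{H}^t$, gives $\mc{H}^t(\mc U\cap E)\gtrsim\delta^{-d}\cdot\delta^{t}=\delta^{t-d}\to\infty$ as $\delta\to0$. To build $\sigma_{\mc U}$, I would use the hypothesis $\capa_t(V_k)>0$ to select near-optimal $\eta_k\in\mc{P}(V_k)$ with $I_t(\eta_k)\le 2\,\capa_t(V_k)^{-1}$, and set $\mu_k:=(L_{\omega_k})_*\eta_k$, a random probability measure on $\omega_kV_k$; by left-invariance of $\varrho$ one has $I_t(\mu_k)=I_t(\eta_k)\le 2\,\capa_t(V_k)^{-1}$.

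First I would handle the first moment. Using unimodularity (so $\lambda$ is both left and right invariant) and Fubini, $\mathbb{E}[\mu_k(B)]=\int_W\eta_k(g^{-1}B)\,d\lambda(g)$, which equals $\lambda(B)$ whenever $B$ lies at distance $\gtrsim\diam V_k$ from $\partial W$ and is only smaller near the boundary. These boundary corrections are exactly the reason the conclusion must be stated on a translate: using right-invariance I would choose $g$ so that $Wg$ sits in the region where coverage probabilities are uniformly comparable to $\lambda(\cdot)$, making $\mathbb{E}[\mu_k(\mc U)]\approx\lambda(\mc U)$ hold uniformly in $k$ for every open $\mc U\subseteq Wg$.

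The crux is the second-moment/energy estimate. Fixing $m$ and using $\sum_{k\ge m}\capa_t(V_k)=\infty$, I would form over a block $m\le k\le N$ the weighted, mass-one measure $\hat\Theta_{m,N}=\big(\sum_k\capa_t(V_k)\big)^{-1}\sum_k\capa_t(V_k)\,\mu_k$ and estimate $\mathbb{E}\,I_t(\hat\Theta_{m,N}|_{\mc U})$. The diagonal (self-energy) part is $\big(\sum_k\capa_t(V_k)\big)^{-2}\sum_k\capa_t(V_k)^2I_t(\mu_k)\le 2\big(\sum_k\capa_t(V_k)\big)^{-1}\to0$, so the $\capa_t$-weighting together with the normalization annihilates the self-energy. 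The off-diagonal terms, being independent translates, reduce by left and right invariance to the background double integral $\iint_{\mc U\times\mc U}\varrho(p,q)^{-t}\,d\lambda(p)\,d\lambda(q)$, which by $(c,d)$-regularity and $t<d$ satisfies $\int_{\mc U}\varrho(p,q)^{-t}\,d\lambda(q)\lesssim(\diam\mc U)^{d-t}$, hence is $\lesssim(\diam\mc U)^{2d-t}$. Thus $\mathbb{E}\,I_t(\hat\Theta_{m,N}|_{\mc U})\lesssim(\diam\mc U)^{2d-t}$ uniformly in $m,N$, while a companion second-moment (Paley--Zygmund) bound on the random mass $\hat\Theta_{m,N}(\mc U)$, whose mean is $\approx\lambda(\mc U)$, keeps it bounded below with positive probability. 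By tightness on the bounded window I would extract a weak limit, using lower semicontinuity of the energy kernel and Fatou to pass the energy bound to the limit, obtaining $\sigma_{\mc U}$ with the two displayed properties, carried (after a diagonal extraction over $m$, shrinking the open $V_k$ slightly to reconcile the closure with $E$) by $\mc U\cap E(\omega)$.

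Finally I would assemble the conclusion. The per-ball estimate plus the partition-and-$t<d$ argument gives $\mc{H}^t(\mc U\cap E)=\infty$ with positive probability for each fixed open $\mc U\subseteq Wg$. Since modifying finitely many coordinates $\omega_k$ leaves $E(\omega)=\limsup_k\omega_kV_k$ unchanged, each event $\{\mc{H}^t(\mc U\cap E)=\infty\}$ is a tail event, so by Kolmogorov's zero--one law it has probability one; intersecting over a countable base of open subsets of $Wg$ (and over a countable family of scales and balls used in the partition) gives the statement simultaneously for \emph{every} open $\mc U\subseteq Wg$, almost surely. The hard part will be the energy estimate of the third paragraph: engineering the clean cancellation of the self-energy through the $\capa_t$-weighting while simultaneously bounding both the off-diagonal energy correlations and the correlations of the mass $\hat\Theta_{m,N}(\mc U)$ by the finite background energy of $\lambda$ — it is the interplay of this control with the regime $t<d$ that ultimately forces infinite, rather than merely positive, Hausdorff measure.
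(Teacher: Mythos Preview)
The paper does not actually prove this theorem: immediately before the statement it writes ``For a proof of this, see \cite[Theorem~1.3]{ekstrometal}'', and no argument is given in the present paper. So there is no in-paper proof to compare your proposal against.

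That said, your sketch is broadly the standard route used for results of this type (including in \cite{ekstrometal}): pick near-optimal energy measures $\eta_k$ on $V_k$, push them forward to $\omega_kV_k$, form capacity-weighted averages so that the diagonal self-energy is killed while the off-diagonal cross-energies reduce by invariance to the finite background Riesz energy of $\lambda$ (this is where $(c,d)$-regularity and $t<d$ enter), run a second-moment/Paley--Zygmund argument, extract a weak limit, and upgrade from positive to full probability via the tail $0$--$1$ law and a countable base. Two places in your outline would need genuine work if you were writing this out: (i) the choice of $g$ so that first-moment coverage is uniformly comparable to $\lambda$ on all of $Wg$ is not just ``right-invariance'' --- one has to control the boundary defect uniformly over the bounded sequence $(V_k)$, and this is where the boundedness hypothesis is actually used; (ii) ensuring the weak limit measure is supported on $\mc{U}\cap E(\omega)$ (not merely on its closure) requires the ``shrink $V_k$ slightly'' step to be done carefully enough that the capacities still diverge, together with a diagonal argument over $m$. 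Your ``infinite measure from positive measure'' step via partitioning into $\approx\delta^{-d}$ disjoint balls and summing $\gtrsim\delta^{t}$ is fine in a $(c,d)$-regular space, since $\mc{H}^t$ is a Borel measure and hence countably additive on disjoint Borel sets.
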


\begin{proof}[Proof of Theorem \ref{theorem1.1}]
Clearly $\dim_{\mathrm{H}}E_{\mathbf{j},\underline{V},\underline{r}}(\omega)\leq 2n+2$ for every $\omega$. Suppose that $t\in [0,2n+2]$ is such that $\sum_{k}\Phi^{t}_{j_k}(r_k,d_k)<\infty$. Then, for any $\omega\in \Omega$ and $k_0\in \N$, we have by Proposition~\ref{prop:upperboundrectangle} that
\begin{align*}
\mc{H}_{\infty}^{t}(E_{\mathbf{j},\underline{V},\underline{r}}(\omega))\leq \sum_{k=k_0}^{\infty}\mc{H}_{\infty}^{t}(\overline{R}_{j_k,V_k}(\omega_k,r_k) )\lesssim \sum_{k=k_0}^{\infty} \Phi^{t}_{j_k}(r_k,d_k).
\end{align*}
Thus $\mc{H}_{\infty}^{t}(E_{\mathbf{j},\underline{V},\underline{r}}(\omega))=0$ and $\dim_{\mathrm{H}}E_{\mathbf{j},\underline{r}}(\omega)\leq t$. This implies that 
 $$\dim_{\mathrm{H}}E_{\mathbf{j},\underline{V},\underline{r}}(\omega)\leq \inf\{ t\mid \sum_{k}\Phi^{t}_{j_k}(r_k,d_k)<\infty\}$$ for every $\omega$.

 Suppose then that $\sum_{k}\Phi^{t}_{j_k}(r_k,d_k)=\infty$ for some $t\in [0,2n+2]\setminus \Z$. Since the sequence $\underline{r}$ is bounded, Proposition \ref{prop:lowerboundrectangle} implies that for $s\in ]\lfloor t\rfloor, t[$, the $s$-capacities of (interiors of) the rectangles $\overline{R}_{j_k,V_k}(0,r_k)$ sum up to infinity and thus Theorem \ref{theorem1.3} (with $\lambda(W)^{-1}\lambda$ in place of $\lambda$) yields $\dim_{\mathrm{H}}E_{\mathbf{j},\underline{V},\underline{r}}(\omega) \geq s$ for $\mathbb{P}$-almost every $\omega$. By letting $s\nearrow t$ through some countable sequence we obtain that $\dim_{\mathrm{H}}E_{\mathbf{j},\underline{V},\underline{r}}(\omega) \geq t$ for $\mathbb{P}$-almost every $\omega$. Finally, letting
 $$t\nearrow\inf\{ t'\mid \sum_{k}\Phi^{t'}_{j_k}(r_k,d_k)<\infty\}\wedge(2n+2)$$ through some sequence completes the proof.
\end{proof}

\begin{remark}\label{remark:otherrectangles}
In \cite{ekstrometal}, the closed rectangle of radius $r=(r_1,r_2)$ centered at the origin is defined to be the set
\begin{align*}
\overline{R}(0,r):= \{ p'\in \Hb\mid \Vert P_{H(0)}(p')\Vert_{\Hb}\leq r_1, \Vert P_{L(0)}(p')\Vert_{\Hb}\leq r_2\},
\end{align*}
where $P_{L(0)}$ and $P_{H(0)}$  are the Euclidean projections (recalling that as a set, $\Hb^{n}=\R^{2n+1}$) onto the vertical line and the horizontal plane through the origin, respectively. The corresponding rectangle centered at $p$ is defined analogously to our definition, that is, $\overline{R}(p,r):=p\overline{R}(0,r).$
In \cite{ekstrometal}, rectangles are defined only in the first Heisenberg group $\Hb^{1}$, but the definition given above works equally well in all Heisenberg groups. 

Define the probability space $(\Omega,\mathbb{P})$ in the same way as before. Given a sequence $\underline{r}:=(r_k)$ of pairs of positive numbers and $\omega\in \Omega$, let $$E_{\underline{r}}(\omega)=\limsup_{k\to \infty}\overline{R}(\omega_{k},r_{k}).$$
The directed singular value functions corresponding to these types of rectangles are defined as follows:
If $r_1\leq r_2$, let
\begin{align*}
\Phi^t(r)=\begin{cases}
r_2^t, &\text{ if } t\in [0,2],\\
r_1^{t-2}r_2^{2}, &\text{ if } t\in [2,2n+2],
\end{cases}
\end{align*}
and if $r_1\geq r_2$ let
\begin{align*}
\Phi^t(r)=\begin{cases}
r_1^t, &\text{ if } t\in [0,2n+1],\\
r_1^{2(2n+1)-t}r_2^{2(t-(2n+1))}, &\text{ if } t\in [2n+1,2n+2].
\end{cases}
\end{align*}

The arguments presented in this paper can be used to show that given a sequence $\underline{r}=(r_k)$, it is true that 
\begin{align*}
\dim_{\mathrm{H}} E_{\underline{r}}(\omega)=\inf \left\{ t\ \Big| \ \sum_{k}\Phi^{t}(r_k)<\infty \right\} \wedge (2n+2)
\end{align*}
$\mathbb{P}$-almost surely.
In the case $n=1$, this formula 
reduces to \cite[Theorem~1.1]{ekstrometal}.
\end{remark}

\newpage

\addcontentsline{toc}{section}{\refname}

\bibliographystyle{acm}
\bibliography{lahteet}

\end{document}